\newcommand{\df}{\mathrm{d}}
\newcommand{\X}{\mathsf{X}}
\newcommand{\Y}{\mathsf{Y}}
\newcommand{\Z}{\mathsf{Z}}
\newcommand{\PD}{P_{\tiny\mbox{DG}}}
\newcommand{\PR}{P_{\tiny\mbox{RG}}}
\newcommand{\PX}{P_{\tiny\mbox{XDG}}}
\newcommand{\PY}{P_{\tiny\mbox{YDG}}}
\newcommand{\PDM}{P_{\tiny\mbox{DC}}}
\newcommand{\PDMM}{\tilde{P}_{\tiny\mbox{DC}}}
\newcommand{\PRM}{P_{\tiny\mbox{RC}}}
\newcommand{\PRMM}{\tilde{P}_{\tiny\mbox{RC}}}
\newcommand{\PXM}{P_{\tiny\mbox{XDC}}}
\newcommand{\rhoR}{\rho(\PR)}
\newcommand{\rhoD}{\rho(\PD)}
\newcommand{\rhoRM}{\rho(\PRM)}
\newcommand{\rhoDM}{\rho(\PDM)}
\newcommand{\hgn}{\hat{g}_n}
\newcommand{\Q}{P_{\tiny\mbox{MH}}}
\newcommand{\pcite}[1]{\citeauthor{#1}'s \citeyearpar{#1}}
\theoremstyle{remark}
\newtheorem{theorem}{Theorem}[section]
\newtheorem{lemma}[theorem]{Lemma}
\newtheorem{corollary}[theorem]{Corollary}
\newtheorem{proposition}[theorem]{Proposition}
\newtheorem{assumption}[theorem]{Assumption}
\newtheorem{example}[theorem]{Example}
\newtheorem{remark}[theorem]{Remark}
\newtheorem{condition}[theorem]{Condition}
\title{Convergence Rates of Two-Component MCMC Samplers}
\date{\today}
\author{Qian Qin and Galin Jones \\
	School of Statistics \\
	University of Minnesota}
\begin{document}

	
	\maketitle
	
	\begin{abstract}
		Component-wise MCMC algorithms, including Gibbs and conditional
		Metropolis-Hastings samplers, are commonly used for sampling from
		multivariate probability distributions.  A long-standing question
		regarding Gibbs algorithms is whether a deterministic-scan
		(systematic-scan) sampler converges faster than its random-scan
		counterpart.  We answer this question when the samplers involve two
		components by establishing an exact quantitative relationship between
		the $L^2$ convergence rates of the two samplers.  The relationship
		shows that the deterministic-scan sampler converges faster.  We also
		establish qualitative relations among the convergence rates of
		two-component Gibbs samplers and some conditional Metropolis-Hastings
		variants.  For instance, it is shown that if some two-component conditional
		Metropolis-Hastings samplers are geometrically ergodic, then so are
		the associated Gibbs samplers.
	\end{abstract}

\section{Introduction}
\label{sec:intro}

Markov chain Monte Carlo (MCMC) algorithms are useful for sampling
from complicated distributions \citep{brooks2011handbook}.
Component-wise MCMC algorithms, such as Gibbs samplers and conditional
Metropolis-Hastings (CMH) samplers, sometimes called
Metropolis-within-Gibbs, are among the most useful in multivariate
settings.  We study the convergence rates of two-component Gibbs
samplers and the case where the components may be updated using
Metropolis-Hastings, paying particular attention to the relationship
between the convergence rates of the Markov chains.

Investigating the convergence rates of the underlying Markov chains is
important for ensuring a reliable simulation effort \citep{geye:1992,
  flegal2008markov, jones2001honest, vats:etal:mce:2020}.  If the
Markov chain converges sufficiently fast, then, under moment
conditions, a central limit theorem holds \citep{chan1994discussion,
  doss:etal:2014, hobert2002applicability, jone:2004,
  robe:etal:viz:2020}.  Additionally, asymptotically valid Monte Carlo
standard errors are available \citep{ dai:jone:2017, fleg:jone:2010,
  jone:etal:2006, vats:etal:sve:2018, vats:etal:moa:2019}.

Let $\Pi(\df x, \df y)$ be a joint probability distribution having
support $\X \times \Y$ and let $\Pi_{X|Y}(\df x|y)$, $y \in \Y$, and
$\Pi_{Y|X}(\df y|x)$, $x \in \X$, be full conditional distributions.
There are many potential component-wise MCMC algorithms having $\Pi$
as their invariant distribution.  When it is possible to simulate from
the conditionals, it is natural to use a Gibbs sampler. One version is
the deterministic-scan Gibbs (DG) sampler, which is now described.

\begin{algorithm}[H]
	\caption{Deterministic-scan Gibbs sampler} \label{alg:DG}
	\begin{algorithmic}[1]
		\State {\it Input:} Current value  $(X_n,Y_n) = (x,y)$.
		\State Draw $Y_{n+1}$ from $\Pi_{Y|X}(\cdot|x)$, and call the observed value $y'$.
		\State Draw $X_{n+1}$ from $\Pi_{X|Y}(\cdot|y')$. 
		\State Set $n=n+1$.
	\end{algorithmic}
\end{algorithm}

An alternative is the random-scan Gibbs (RG) sampler which is described below.

\begin{algorithm}[H]
	\caption{Random-scan Gibbs sampler with selection probability $r \in (0,1)$ } \label{alg:RG}
	\begin{algorithmic}[1]
		\State {\it Input:} Current value $(X_n,Y_n) = (x,y)$.
		
		\State Draw $U \sim \mbox{Bernoulli}(r)$, and call the observed
		value~$u$.
		
		\State If $u = 1$, draw $X_{n+1}$ from $\Pi_{X|Y}(\cdot|y)$, and
		set $Y_{n+1} = y$.
		
		\State  If $u = 0$, draw $Y_{n+1}$ from $\Pi_{Y|X}(\cdot|x)$, and set $X_{n+1} = x$.
		
		\State Set $n=n+1$.
	\end{algorithmic}
\end{algorithm}

Two-component Gibbs samplers are surprisingly useful and widely
applicable in the analysis of sophisticated Bayesian statistical
models.
In particular, they arise naturally in data augmentation
settings \citep{hobe:2011, tanner1987calculation, van2001art}.

There is abundant study of the convergence properties of Gibbs
samplers, both in the general case \citep[see][]{liu:etal:1994,
	roberts1994geometric, liu1995covariance} and for two-component Gibbs
samplers in specific statistical settings; see, among many others,
\citet{diaconis2008gibbs}, \citet{doss:hobe:2010},
\citet{ekva:jone:2019}, \citet{hobe:geye:1998},
\citet{john:jone:2008}, \citet{johnson2015geometric},
\citet{jone:hobe:2004}, \citet{khare:hobe:2013},
\citet{marchev2004geometric}, \citet{roy:2012}, \citet{tan2009block},
\citet{wang:roy:2018:polyagamma}, and \citet{wang:roy:2018:probit}.
However, there is not yet an answer to the following basic question:
which converges faster, a deterministic- or random-scan Gibbs sampler?

There exist some qualitative results related to this question
\citep[see][]{johnson2013component, tan:etal:2013}.  For instance,
\pcite{roberts1997geometric} Proposition~3.2 states that a random-scan
Gibbs sampler is uniformly ergodic whenever an associated
deterministic-scan Gibbs sampler is too. There is also literature
devoted to finding the convergence rates of various Gibbs samplers
when~$\Pi$ is Gaussian, or approximately Gaussian \citep[see,
e.g.,][]{amit1991rates, amit1996convergence, amit1991comparing,
  roberts1997updating} or in the finite discrete state space setting
\citep{fish:1996}.  However, in general, the relationship between the
convergence rates of deterministic- and random-scan Gibbs samplers is
poorly understood.

A related question is addressed by \cite{andrieu2016random}, who shows
that the DG sampler yields sample means with smaller asymptotic
variances than its random-scan counterpart, assuming that, in the RG
sampler, the selection probability is $r=1/2$ \citep[see,
also,][]{greenwood1998information}.  On the other hand, the author
remarks that making such a comparison in terms of convergence times is
unlikely to bear fruit \citep[][page 720]{andrieu2016random}.  This is
because there are examples suggesting that, when the Gibbs sampler has
a large number of components, there is no definite answer to the
question above \citep{roberts2016suprising}.

We give an exact solution to the question in the two-component
setting. Indeed, we develop a quantitative relationship between the
convergence rates of the two types of Gibbs samplers, and show that
the deterministic-scan sampler converges faster than its random-scan
counterpart no matter the selection probability in the random scan.
This result is described now, but the full details are dealt with
carefully later. The $L^2$ convergence rate of a Markov chain is a
number in $[0,1]$, with smaller rates indicating faster convergence.
Let $\rhoD$ be the $L^2$ convergence rate of the DG sampler, and,
$\rhoR$, that of the RG sampler. We show that
\begin{equation} 
\label{eq:quantitative}
\rhoR = \frac{1 + \sqrt{1 - 4r(1-r)[1-\rhoD]}}{2} .
\end{equation}
There are some easy, but noteworthy, consequences of this result.
Notice that (i) $\rhoR \in [1/2,1]$ while $\rhoD \in [0,1]$; (ii) as
either $\rhoD$ or $\rhoR$ increases so does the other; (iii) if
$\rhoD < 1$, then $\rhoR > \rhoD$, but $\rhoD=1$ if and only if
$\rhoR=1$; and (iv) the optimal selection probability for $\PR$ is
$r=1/2$ in which case
\begin{equation*} 
\rhoR = \frac{1 + \sqrt{\rhoD}}{2} .
\end{equation*}
In Section~\ref{sec:quantitative} we generalize this discussion and
show that the DG sampler converges faster even after taking into
account computation time.  Indeed, if $k_{\tiny\mbox{D}}$ and
$k_{\tiny\mbox{R}}$ are the number of iterations that can be run by DG
and RG samplers, respectively, in unit time,
then~\eqref{eq:quantitative} implies that
$\rho(\PD)^{k_{\tiny\mbox{D}}} \leq \rho(\PR)^{k_{\tiny\mbox{R}}}$ for
any selection probability.

Perhaps the most common type of MCMC sampler in applications are
conditional Metropolis-Hastings (CMH) samplers. These Markov chains
arise when it is infeasible to sample from at least one of the
conditional distributions associated with~$\Pi$ so that at least one
Metropolis-Hastings update must be used.  Assume that $\Pi_{Y|X}$ and
$\Pi_{X|Y}$, respectively, admit density functions $\pi_{Y|X}$ and
$\pi_{X|Y}$.  Let $q(\cdot|x,y), \, (x,y) \in \X \times \Y$, be a
proposal density function on~$\X$.  A deterministic-scan CMH (DC)
sampler we study is now described, but a more general algorithm is
considered in Section~\ref{sec:cmh}.

\begin{algorithm}[H]
	\caption{Deterministic-scan CMH sampler} \label{alg:DC}
	\begin{algorithmic}[1]
		\State {\it Input:} Current value  $(X_n,Y_n) = (x,y)$
		\State Draw $Y_{n+1}$ from $\Pi_{Y|X}(\cdot|x)$, and call the observed value $y'$.
		\State Draw a random element~$Z$ from $q(\cdot|x,y')$, and call the observed value~$z$.
		With probability
		\[
		a(z;x,y') = \min \left\{ 1, \frac{\pi_{X|Y}(z|y') q(x|z,y')}{\pi_{X|Y}(x|y') q(z|x,y')} \right\} ,
		\]
		set $X_{n+1} = z$;
		with probability $1 - a(z;x,y')$, set $X_{n+1} = x$.
		\State Set $n=n+1$.
	\end{algorithmic}
\end{algorithm}
There is an obvious alternative random-scan CMH (RC) sampler.
\begin{algorithm}[H]
	\caption{Random-scan CMH sampler with selection probability $r \in (0,1)$ } \label{alg:RC}
	\begin{algorithmic}[1]
		\State {\it Input:} Current value  $(X_n,Y_n) = (x,y)$.
		
		\State Draw $U \sim \mbox{Bernoulli}(r)$, and call the observed
		value~$u$.
		
		\State If $u = 1$, draw a random element~$Z$ from $q(\cdot|x,y)$,
		and call the observed value~$z$.  With probability
		\[
		a(z;x,y) = \min \left\{ 1, \frac{\pi_{X|Y}(z|y) q(x|z,y)}{\pi_{X|Y}(x|y) q(z|x,y)} \right\} ,
		\]
		set $X_{n+1} = z$; with probability $1 - a(z;x,y)$,
		set $X_{n+1} = x$. Set $Y_{n+1} = y$.
		
		\State  If $u = 0$, draw $Y_{n+1}$ from $\Pi_{Y|X}(\cdot|x)$, and set $X_{n+1} = x$.
		
		\State Set $n=n+1$.
	\end{algorithmic}
\end{algorithm}

Despite their utility, compared to Gibbs samplers there has been
little investigation of CMH Markov chains \citep{fort2003geometric,
	herb:mcke:2009, johnson2013component, jones2014convergence,
	rose:rose:2015, roberts1997geometric, robe:rose:1998} but what there
is tends not to focus on specific statistical models.  For example,
\pcite{johnson2013component} Theorem~3 states that if a deterministic scan
component-wise Markov chain is uniformly ergodic, then so is its
random-scan counterpart, thus generalizing the result proved for Gibbs
samplers by \citet{roberts1997geometric}, which was described
previously.

Both versions of Gibbs samplers are special cases of the respective
versions of CMH samplers.  Thus it is plausible that there should be
some relationship among the convergence rates of the Markov chains of
Algorithms 1--4, especially if the CMH samplers are ``close'' to the
Gibbs samplers.  There are a few results in this direction.  For
example, there are sufficient conditions which ensure that if the RG
Markov chain is geometrically ergodic, then so is the RC Markov chain
\citep[][Theorem~6]{jones2014convergence}.  However, these
relationships are not well understood in general and the following
question has not been addressed satisfactorily: if one of the four
basic component-wise samplers is geometrically ergodic, then, in
general, which of the remaining three are also geometrically ergodic?

\begin{figure}[h] 
	\centering
	\includegraphics[width=0.35\textwidth]{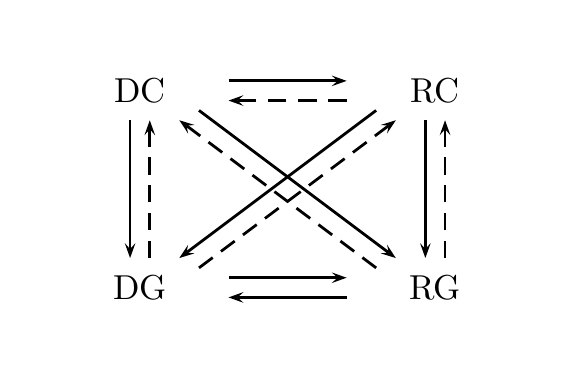}
	\caption{Relationship among
		two-component Gibbs samplers and their CMH variants in terms of
		$L^2$ geometric ergodicity.}
	\label{fig:ergodicity}
\end{figure}

We give an answer to this question by developing qualitative
relationships among the convergence rates of the DG, RG, DC, and RC
samplers, which are depicted in Figure~\ref{fig:ergodicity}.  Here, we
consider $L^2$ geometric ergodicity.  A Markov chain is $L^2$
geometrically ergodic if its $L^2$ convergence rate is strictly less
than~$1$.  Under regularity conditions, $L^2$ geometric ergodicity is
equivalent to the usual notion of geometric ergodicity defined in
terms of the total variation distance.  (This equivalence will be made
precise in Section~\ref{sec:basic}.)  In Figure~\ref{fig:ergodicity},
a solid arrow from one sampler to another means that, if the former is
$L^2$ geometrically ergodic, then so is the latter.  A dashed arrow
means that $L^2$ geometric ergodicity of the former only implies that
of the latter under appropriate conditions on the proposal density
$q(\cdot|x,y)$.  One of these conditions is condition~\ref{cond:C} in
Section~\ref{sec:qualitative}.

Figure~\ref{fig:ergodicity} yields the following.  The DG sampler is
$L^2$ geometrically ergodic if and only if the RG sampler is.  If the
RC sampler is $L^2$ geometrically ergodic for some proposal density,
then so are the DG and RG samplers.  If the DC sampler is $L^2$
geometrically ergodic for some proposal density, then so is the RC
sampler with the same proposal density. The relations depicted in
Figure~\ref{fig:ergodicity} hold regardless of the selection
probabilities for the random-scan samplers. 

The remainder is organized as follows.
Section~\ref{sec:preliminaries} contains some general theoretical
background.  In Section~\ref{sec:basic}, we lay out some basic
properties of the four types of samplers.  In
Section~\ref{sec:quantitative}, we derive~\eqref{eq:quantitative}, and
discuss its implications.  In Section~\ref{sec:qualitative}, we
establish the relations shown in Figure~\ref{fig:ergodicity} along
with additional connections with more general CMH samplers.
We give some final remarks in Section~\ref{sec:final}.  
Some technical details are relegated to the appendices.

\section{Preliminary Markov Chain Theory} 
\label{sec:preliminaries}

Let $(\Z, \mathcal{F})$ be a measurable space and
let $P$ be a Markov transition kernel (Mtk), that is, let
$P : \Z \times \mathcal{F} \to [0,1]$ be such that for each
$z \in \mathcal{Z}$, $P(z, \cdot)$ is a probability measure and for
each $A \in \mathcal{F}$, $P(\cdot, A)$ is measurable.  
For a positive integer~$n$, denote the $n$-step transition kernel associated with~$P$ by $P^n$, so that $P^1 = P$, and
\[
P^{n+1}(z,A) = \int_{\Z} P(z',A) P^n(z,\df z')
\]
for $z \in \Z$ and $A \in \mathcal{F}$.
If $\omega$ is a probability measure on $(\Z, \mathcal{F})$ and $A \in
\mathcal{F}$, define
\[
(\omega P)(A) = \int \omega(dz) P(z, A) .
\]
Say $\omega$ is invariant for $P$ if $\omega P = \omega$.  If
\begin{equation}
\label{eq:dbc}
P(z, dz') \omega(dz) = P(z', dz) \omega(dz') ,
\end{equation}
then $P$ is said to be reversible with respect to $\omega$.
Integrating both sides of the equality in \eqref{eq:dbc} shows that
$\omega$ is invariant for~$P$.

For a measurable function $f: \Z \to \mathbb{R}$ and a probability measure $\mu: \mathcal{F} \to [0,1]$, define
\[
(Pf)(z) = \int f(z') P(z, dz') \quad
\text{and} \quad
\mu f = \int_{\Z} f(z) \mu(\df z) .
\]
Assume that~$\omega$ is invariant for~$P$.
Let $L^2(\omega)$ be the set of measurable real functions~$f$ that are
square integrable with respect to $\omega$ and let $L_0^2(\omega)$ be
the set of functions~$f \in L^2(\omega)$ such that $\omega f = 0$.
For $f, g \in L^2(\omega)$, define their inner product to be
\[
\langle f, g \rangle_{\omega} = \int_{\Z} f(z) g(z) \, \omega(\df z) \,, 
\]
and let $\|f\|_{\omega}^2 = \langle f, f \rangle_{\omega}$. 
Then $(L^2(\omega), \langle\cdot,\cdot\rangle_{\omega})$ and $(L_0^2(\omega), \langle\cdot,\cdot\rangle_{\omega})$ form two real Hilbert spaces.
For any
$f \in L_0^2(\omega)$, we have $Pf \in L_0^2(\omega)$. Thus,~$P$ can
be regarded as a linear operator on $L_0^2(\omega)$.  Let
\[
\|P \|_{\omega} = \sup_{f \in L_0^2(\omega), \,\|f\|_{\omega} = 1} \|P f\|_{\omega} \,.
\]
By the Cauchy-Schwarz inequality, $\|P\|_{\omega} \leq 1$.  When~$P$
is reversible with respect to~$\omega$,~$P$, as an operator on
$L_0^2(\omega)$, is self-adjoint so that
$\langle Pf_1, f_2 \rangle_{\omega} = \langle f_1, P
f_2\rangle_{\omega}$ for $f_1,f_2 \in L_0^2(\omega)$, and
\[
\|P\|_{\omega} = \sup_{f \in L_0^2(\omega), \,\|f\|_{\omega} = 1}
|\langle Pf, f\rangle_{\omega}| .
\]
Moreover, if $P$ is self-adjoint, then for each positive integer $n$,
\[
\|P^n\|_{\omega} = \|P\|^{n}_{\omega} 
\]
\citep[see, e.g.,][\S 30 Corollary 8.1, \S 31 Corollary
2.1]{helmberg2014introduction}. Say~$P$ is non-negative definite if it
is self-adjoint, and $\langle Pf, f \rangle_{\omega} \geq 0$ for each
$f \in L_0^2(\omega)$.

For two probability measures $\mu$ and $\nu$ on $(Z, \mathcal{F})$,
define their $L^2$ (or $\chi^2$) distance to be
\[
\|\mu - \nu\|_{\omega} = \sup_{f \in L_0^2(\omega), \,\|f\|_{\omega}
	= 1} |\mu f - \nu f| \,.
\]
Let $L_*^2(\omega)$ be the set of probability measures~$\mu$ such that
$\df \mu / \df \omega \in L^2(\omega)$.  
When
$\mu, \nu \in L_*^2(\omega)$,
\[
  \|\mu - \nu\|_{\omega} = \sup_{f \in L_0^2(\omega),
    \,\|f\|_{\omega}=1}  \left\langle \frac{\df \mu}{\df \omega}
      - \frac{\df \nu}{\df \omega} , f \right\rangle_{\omega} 
  = \left\| \frac{\df \mu}{\df \omega} - \frac{\df \nu}{\df \omega}
  \right\|_{\omega} \,.
\]
The $L^2$ convergence rate of the Markov chain associated with~$P$,
denoted by $\rho(P)$, is defined to be the infimum of $\rho \in [0,1]$
such that, for each $\mu \in L_*^2(\omega)$, there exists $C_{\mu} <
\infty$ such that, for each positive integer~$n$,
\[
\|\mu P^n - \omega\|_{\omega} < C_{\mu} \rho^n .
\]
When $\rho(P) < 1$, we say that the Markov chain is $L^2$
geometrically ergodic, or more simply,~$P$ is $L^2$ geometrically
ergodic. The following is a direct consequence of
\pcite{roberts1997geometric} Theorem 2.1 and we will use it
extensively.
\begin{lemma}
	\label{lem:roberts} 
	If $P$ is reversible with respect to~$\omega$, then
	$\rho(P) = \|P\|_{\omega}$.
\end{lemma}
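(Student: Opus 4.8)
The plan is to prove the two inequalities $\rho(P) \le \|P\|_\omega$ and $\rho(P) \ge \|P\|_\omega$ separately; write $r = \|P\|_\omega$. Reversibility gives $\langle g, P\phi \rangle_\omega = \langle Pg, \phi \rangle_\omega$ for all $g, \phi \in L^2(\omega)$, so $P$ is a self-adjoint contraction on $L^2(\omega)$ that fixes the constants; in particular the cited identity $\|P^n\|_\omega = \|P\|_\omega^n$ applies. For the upper bound I would take $\mu \in L_*^2(\omega)$ and set $h = \df\mu/\df\omega - 1$, which lies in $L_0^2(\omega)$ since $\mu$ and $\omega$ are probability measures. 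By self-adjointness and $P 1 = 1$, an induction (using that if $\nu$ has $\omega$-density $g$ then $\nu P$ has $\omega$-density $Pg$) shows $\mu P^n$ has density $1 + P^n h$ with respect to $\omega$, so by the displayed formula for the $L^2$ distance of measures in $L_*^2(\omega)$,
\[
\|\mu P^n - \omega\|_\omega = \|P^n h\|_\omega \le \|P^n\|_\omega \, \|h\|_\omega = r^n \|h\|_\omega .
\]
Taking $C_\mu = \|h\|_\omega + 1$ shows $r$ belongs to the set defining $\rho(P)$, hence $\rho(P) \le r$.

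For the reverse inequality it suffices to show that no $\rho < r$ lies in the defining set, i.e.\ to produce, for each such $\rho$, a measure $\mu \in L_*^2(\omega)$ for which $\|\mu P^n - \omega\|_\omega$ is not $O(\rho^n)$. Let $E(\cdot)$ be the projection-valued spectral measure of $P$ on $L_0^2(\omega)$, so $\sigma(P) \subseteq [-r, r]$ with spectral radius exactly $r$. For $h \in L_0^2(\omega)$ set $\mu_h(B) = \|E(B) h\|_\omega^2$; the spectral theorem gives $\|P^n h\|_\omega^2 = \int \lambda^{2n}\, \mu_h(\df\lambda)$, and letting $n \to \infty$ yields $\limsup_n \|P^n h\|_\omega^{1/n} = \sup\{ |\lambda| : \lambda \in \mathrm{supp}\,\mu_h \} =: r(h)$. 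The crux is the claim that $\sup\{ r(h) : h \in L_0^2(\omega),\ h\ \text{bounded} \} = r$: if this supremum were some $r' < r$, then $E((r', r]) h = 0 = E([-r, -r')) h$ for every bounded $h \in L_0^2(\omega)$, and since bounded functions are dense in $L_0^2(\omega)$ and spectral projections are bounded operators, this would force $E((r', r]) = E([-r, -r')) = 0$, i.e.\ $\sigma(P) \subseteq [-r', r']$, contradicting that the spectral radius equals $r$.

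Granting the claim, fix $\rho < r$, choose a bounded $h_0 \in L_0^2(\omega)$ with $r(h_0) > \rho$, and pick $\delta > 0$ small enough (possible because $h_0$ is bounded) that $1 + \delta h_0 \ge 0$; the probability measure $\mu_0$ with $\omega$-density $1 + \delta h_0$ then lies in $L_*^2(\omega)$ and, exactly as in the upper-bound computation, $\|\mu_0 P^n - \omega\|_\omega = \delta \|P^n h_0\|_\omega$, so $\limsup_n \|\mu_0 P^n - \omega\|_\omega^{1/n} = r(h_0) > \rho$. Hence $\rho$ is not in the defining set, so $\rho(P) \ge r$, which together with the first part gives $\rho(P) = r$. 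I expect the only delicate point to be the density-plus-continuity argument showing that bounded shifted densities already exhibit the full spectral radius of $P$; the rest is bookkeeping. (This is in essence Theorem~2.1 of \cite{roberts1997geometric}, which one could alternatively invoke directly.)
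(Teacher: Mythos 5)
Your proof is correct. The paper does not actually prove this lemma---it simply cites Theorem~2.1 of Roberts and Rosenthal (1997)---and your spectral-theorem argument is in essence the standard proof of that cited result, so you have taken the same route while supplying the details the paper outsources to the reference. The two points that usually cause trouble are both handled properly: you use reversibility (self-adjointness) to identify the $\omega$-density of $\mu P^n$ as $1+P^n h$ so that the operator-norm bound $\|P^n\|_\omega=\|P\|_\omega^n$ transfers to the $L^2$ distance of measures, and in the lower bound you correctly pass from bounded mean-zero $h$ (dense in $L_0^2(\omega)$, so the spectral projections onto $\{|\lambda|>r'\}$ cannot all annihilate them) to genuine probability measures $\left(1+\delta h_0\right)\omega$ in $L_*^2(\omega)$.
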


The following comparison lemma will be useful in conjunction with
Lemma~\ref{lem:roberts}. 
\begin{lemma}
	\label{lem:comparison}
	Let $P_1$ and $P_2$ be Mtks on
	$(\Z, \mathcal{F})$ having a common stationary distribution~$\omega$.
	Suppose further that $\|P_2\|_{\omega} < 1$ and there exists
	$\delta > 0$ such that, for $z \in \Z$ and $A \in \mathcal{F}$,
	$P_1(z,A) \geq \delta P_2(z,A)$.  Then $\|P_1\|_{\omega} < 1$.
\end{lemma}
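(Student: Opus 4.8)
The plan is to exploit the minorization-type hypothesis $P_1 \geq \delta P_2$ to write $P_1$ as a mixture of $P_2$ and a residual kernel, and then estimate the operator norm of $P_1$ on $L_0^2(\omega)$ by a convexity/triangle-inequality argument. First I would set $\delta' = \min\{\delta,1\}$ and define the residual $R(z,A) = (1-\delta')^{-1}[P_1(z,A) - \delta' P_2(z,A)]$; the hypothesis guarantees $P_1 - \delta' P_2 \geq 0$, and since $P_1(z,\cdot)$ and $P_2(z,\cdot)$ are probability measures, $R(z,\cdot)$ is a probability measure for each $z$, so $R$ is a bona fide Mtk. Integrating the identity $P_1 = \delta' P_2 + (1-\delta') R$ against $\omega$ and using that $\omega$ is invariant for both $P_1$ and $P_2$ shows $\omega R = \omega$, so $\omega$ is stationary for $R$ as well, and $R$ maps $L_0^2(\omega)$ into itself.

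Next, viewing everything as operators on the Hilbert space $L_0^2(\omega)$, the mixture identity gives $P_1 f = \delta' P_2 f + (1-\delta') R f$ for every $f \in L_0^2(\omega)$. By the triangle inequality and the general bound $\|R\|_{\omega} \leq 1$ (Cauchy--Schwarz, as noted in the excerpt), for any $f$ with $\|f\|_{\omega} = 1$ we get
\[
\|P_1 f\|_{\omega} \leq \delta' \|P_2 f\|_{\omega} + (1-\delta') \|R f\|_{\omega} \leq \delta' \|P_2\|_{\omega} + (1-\delta') .
\]
Taking the supremum over such $f$ yields $\|P_1\|_{\omega} \leq \delta' \|P_2\|_{\omega} + (1-\delta')$. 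Since $\|P_2\|_{\omega} < 1$ by hypothesis and $\delta' > 0$, the right-hand side is a strict convex combination of $\|P_2\|_{\omega}$ and $1$, hence strictly less than $1$; this is exactly $\|P_1\|_{\omega} < 1$.

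The only genuinely delicate point is the first step: verifying that $R$ is well-defined as a Markov transition kernel, i.e.\ that $P_1(z,A) - \delta' P_2(z,A)$ is nonnegative for all $A$ (immediate from the hypothesis once $\delta' \leq \delta$) and that it has total mass $1-\delta'$, together with measurability of $R(\cdot,A)$. The normalization is automatic because $P_1(z,\Z) = P_2(z,\Z) = 1$. One should also note the harmless truncation $\delta' = \min\{\delta,1\}$, needed only because the stated $\delta$ could a priori exceed $1$ (though the probability-measure hypothesis forces $\delta \leq 1$ anyway). After that, everything is a routine operator-norm estimate, and no reversibility or self-adjointness of $P_1$ or $P_2$ is required.
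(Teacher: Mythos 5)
Your proof is correct and is essentially the paper's own argument: the same residual-kernel decomposition $P_1 = \delta' P_2 + (1-\delta')R$, the observation that $\omega R = \omega$ with $\|R\|_{\omega}\leq 1$, and the same triangle-inequality estimate (the paper simply says ``without loss of generality assume $\delta<1$'' where you truncate to $\delta'=\min\{\delta,1\}$; note only that if $\delta'=1$ your normalization $(1-\delta')^{-1}$ degenerates, but then $P_1=P_2$ and the claim is trivial). Nothing further is needed.
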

\begin{proof}
	Without loss of generality, assume that $\delta < 1$.  Let
	$R(z,A) = (1-\delta)^{-1} (P_1(z,A) - \delta P_2(z,A))$.  Then
	$R(z,A)$ defines an Mtk such that
	$\omega R = \omega$.  By Cauchy-Schwarz, $\|R\|_{\omega} \leq 1$.
	By the triangle inequality,
	$\|P_1\|_{\omega} \leq \delta \|P_2\|_{\omega} + (1-\delta)
	\|R\|_{\omega} < 1$.
\end{proof}

We can use these lemmas to obtain a generalization of
\pcite{jones2014convergence} Proposition 2.  This will allow us to
treat the selection probabilities in the random-scan algorithms as
arbitrary when studying their qualitative convergence rates.
\begin{proposition} 
	\label{pro:selection}
	Let $P_1$ and $P_0$ be Mtks on
	$(\Z, \mathcal{F})$ such that for any
	$0<r<1$ the mixture kernel $P_{r} = rP_{1} + (1-r) P_{0}$ is
	reversible with respect to $\omega$. If $\rho(P_{r_0}) <1$ for some
	$r_0 \in (0,1)$, then $\rho(P_{r}) <1$ for every $r \in (0,1)$.
\end{proposition}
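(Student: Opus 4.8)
The plan is to apply the comparison lemma (Lemma~\ref{lem:comparison}) with $P_{r_0}$ in the role of the well-behaved kernel and $P_r$ the kernel whose operator norm we want to control. First I would reduce the claim to a statement about operator norms: since $P_{r_0}$ is reversible with respect to $\omega$, Lemma~\ref{lem:roberts} gives $\|P_{r_0}\|_\omega = \rho(P_{r_0}) < 1$, and likewise $\rho(P_r) = \|P_r\|_\omega$ for every $r \in (0,1)$. Hence it suffices to show $\|P_r\|_\omega < 1$ for an arbitrary fixed $r \in (0,1)$.

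The key step is to produce a minorization $P_r(z,A) \geq \delta\, P_{r_0}(z,A)$, valid for all $z \in \Z$ and $A \in \mathcal{F}$, with $\delta > 0$ depending only on $r$ and $r_0$. Writing the difference as a signed combination,
\[
P_r(z,A) - \delta\, P_{r_0}(z,A) = (r - \delta r_0)\, P_1(z,A) + \bigl((1-r) - \delta(1-r_0)\bigr) P_0(z,A),
\]
I would take $\delta = \min\{\, r/r_0,\ (1-r)/(1-r_0)\,\}$, which is strictly positive because $0 < r < 1$ and $0 < r_0 < 1$. With this choice both coefficients $r - \delta r_0$ and $(1-r) - \delta(1-r_0)$ are nonnegative, and since $P_1(z,A), P_0(z,A) \geq 0$, the minorization follows.

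Finally I would invoke Lemma~\ref{lem:comparison} with its $P_2$ taken to be $P_{r_0}$ and its $P_1$ taken to be $P_r$: the hypotheses $\|P_{r_0}\|_\omega < 1$ and $P_r(z,A) \geq \delta\, P_{r_0}(z,A)$ are exactly what has just been established, so the conclusion is $\|P_r\|_\omega < 1$, i.e.\ $\rho(P_r) < 1$. Since a valid $\delta$ is obtained for every $r \in (0,1)$, this finishes the proof. I do not expect a genuine obstacle here; the only mildly delicate point is verifying that the single elementary choice of $\delta$ makes both the $P_1$-coefficient and the $P_0$-coefficient nonnegative simultaneously, which is built into the two-sided definition of $\delta$.
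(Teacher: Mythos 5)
Your proposal is correct and follows essentially the same route as the paper: the minorization $P_r(z,A) \geq \min\{r/r_0,\,(1-r)/(1-r_0)\}\,P_{r_0}(z,A)$ combined with Lemmas~\ref{lem:roberts} and~\ref{lem:comparison} is exactly the paper's argument. Your explicit verification that both mixture coefficients stay nonnegative under this choice of $\delta$ is a detail the paper leaves implicit, but it is the same proof.
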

\begin{proof}
	For each $z \in \Z$ and $A \in \mathcal{F}$,
	\[
	P_r (z,A) \geq \min \left\{ \frac{r}{r_0},
	\frac{1-r}{1-r_0} \right\} P_{r_0} (z,A) \,.
	\]
	Since $P_{r}$ is reversible with respect to $\omega$ for all
	$r \in (0,1)$, the claim follows from Lemmas~\ref{lem:roberts}
	and~\ref{lem:comparison}.
\end{proof}

\begin{remark}
While we will not require it, it is straightforward to extend the
proof of Proposition~\ref{pro:selection} to the setting where there
is an arbitrary, but finite, number of Mtks in the mixture.
\end{remark}

We are now in position to begin our study of the algorithms defined in
Section~\ref{sec:intro}.

\section{Basic Properties of Two-component Samplers}
\label{sec:basic} 

We begin by defining the Markov transition kernels for the four
algorithms described in Section~\ref{sec:intro} along with some
related Markov chains that will be useful later.  Then we will turn
our attention to some basic properties of the operators and total
variation norms for these Markov chains.

Suppose $(\X \times \Y, \mathcal{F}_{X} \times\mathcal{F}_{Y})$ is a
measurable space with a joint probability
distribution $\Pi (\df x, \df y)$.  Let $\Pi_{X}(\df x)$ and
$\Pi_{Y}(\df y)$ be the associated marginal distributions, and,
$\Pi_{X|Y}(\df x|y)$ and $\Pi_{Y|X}(\df y | x)$, the full
conditional distributions. To avoid trivial cases we make the
following standing assumption.
\begin{assumption} 
	\label{as:1} 
	There exist $A_1, A_2 \in \mathcal{F}_{X} $ and
	$B_1, B_2 \in \mathcal{F}_{Y} $ such that $A_1 \cap A_2 = \emptyset$,
	$B_1 \cap B_2 = \emptyset$, and that $\Pi_X(A_1) > 0$,
	$\Pi_X(A_2) > 0$, $\Pi_Y(B_1) > 0$, $\Pi_Y(B_2)>0$.
\end{assumption}
When Assumption~\ref{as:1} is violated, at least one of $\mathcal{F}_{X} $ and
$\mathcal{F}_{Y}$ contain only sets of measure zero or one, and
all the problems we study become essentially trivial. 

Letting~$\Pi$,~$\Pi_X$, or~$\Pi_Y$ play the role of $\omega$ from
Section~\ref{sec:preliminaries}, as appropriate, allows us to consider
the Mtks defined in the sequel as linear operators on the appropriate
Hilbert spaces.  Assumption~\ref{as:1} ensures $L_0^2(\Pi)$,
$L_0^2(\Pi_X)$, and $L_0^2(\Pi_Y)$ contain non-zero elements.

\subsection{Markov transition kernels}
\label{sec:mtk}

The Mtk for the DG sampler is
\[
\PD((x,y), (\df x', \df y')) = \Pi_{X|Y}(\df x'|y') \Pi_{Y|X}(\df
y'|x) .
\]
Now $\PD$ has $\Pi$ as its invariant distribution, but it is not
reversible with respect to $\Pi$. If $\delta_x$ and $\delta_y$ are
point masses at~$x$ and~$y$, respectively, then the Mtk for the RG
sampler is
\[
\PR((x,y), (\df x', \df y')) = r \Pi_{X|Y}(\df x'|y) \delta_y(\df y')
+ (1-r) \Pi_{Y|X}( \df y'|x) \delta_x(\df x') .
\]
It is well known that $\PR$ is reversible with respect to $\Pi$ and
hence has $\Pi$ as its invariant distribution. Now let $\Q$ denote the
Metropolis-Hastings Mtk \citep{tierney1994markov, tier:1998} which is reversible
with respect to the full conditional $\Pi_{X|Y}$.  Then the Mtk for
the DC sampler is
\[
\PDM((x,y), (\df x', \df y')) =  \Q(\df x'|x,y') \, \Pi_{Y|X}(\df y'|x) .
\]
Note that $\PDM$ has $\Pi$ as its invariant distribution, but it is
not reversible with respect to $\Pi$.
The Mtk for the RC sampler is
\[
\PRM((x,y), (\df x', \df y')) = r \Q(\df x'|x,y) \delta_y(\df y') +
(1-r) \Pi_{Y|X}(\df y'|x) \delta_x(\df x') 
\]
and it is again well known that $\PRM$ is reversible with respect to
$\Pi$ and hence has $\Pi$ as its invariant distribution.

It will be convenient to consider marginalized versions of the DG chain, which we now define. The $X$-marginal DG chain is defined
on~$\X$, and its Mtk is
\[
\PX(x,\df x') = \int_{\Y} \Pi_{X|Y}(\df x'|y) \Pi_{Y|X}(\df y|x)  .
\]
Similarly, the $Y$-marginal DG chain is defined on~$\Y$, and has Mtk
\[
\PY(y, \df y') = \int_{\X} \Pi_{Y|X}(\df y'|x) \Pi_{X|Y}(\df x|y) .
\]
Note that $\PX$ and $\PY$ are reversible with respect to $\Pi_X$ and
$\Pi_Y$, respectively \citep[][Lemma~3.1]{liu:etal:1994}.  Moreover, it is
well-known that the convergence properties of the marginal, $\PX$ and
$\PY$, chains are essentially those of the original DG chain
\citep{robe:1995, roberts2001markov}.

There also exists an $X$-marginal version of the DC sampler (but not a
$Y$-marginal version) with Mtk given by
\[
\PXM(x, \df x') = \int_{\Y} \Q(\df x'|x,y) \Pi_{Y|X}(\df y|x) .
\]
\pcite{jones2014convergence} Section~2.4 shows that $\PXM$ is reversible with
respect to $\Pi_X$ and enjoys the same qualitative rate of convergence
in total variation norm as the parent DC sampler.

\subsection{Operator norms}
\label{sec:norms}

It is clear that $\PD$, $\PR$, $\PDM$, and $\PRM$ can be regarded as
operators defined on $L_0^2(\Pi)$.  Among them,~$\PR$ and~$\PRM$ are
self-adjoint.  It can be checked that $\PR$ is non-negative definite; see \cite{liu1995covariance}, Lemma~3, and \cite{rudo:ullr:2013}, Section 3.2.
Also, $\PX$ and~$\PXM$ are self-adjoint
operators on $L_0^2(\Pi_X)$, while~$\PY$ is a self-adjoint operator on
$L_0^2(\Pi_Y)$.  Moreover, $\PX$ and~$\PY$ are non-negative definite
\citep[][Lemma~3.2]{liu:etal:1994}.

Using Lemma~\ref{lem:roberts} and the fact that RG and RC chains are
reversible with respect to $\Pi$, we have
\[
\rho(\PR) = \|\PR\|_{\Pi} \quad  \text{and} \quad \rho(\PRM) = \|\PRM\|_{\Pi} .
\]
Similar relations for the deterministic-scan samplers are given in the
following lemma, whose proof is given in Appendix~\ref{app:marginal}.  
\begin{lemma} 
	\label{lem:marginal}
	For each positive integer~$n$,
	\[
	\|\PD^n\|_{\Pi}^{1/(n-1/2)} = \rho(\PD) = \|\PX\|_{\Pi_X} = \|\PY\|_{\Pi_Y} \,
	\]
	\[
	\|\PDM^n\|_{\Pi}^{1/(n-1)} \leq \rho(\PDM) = \|\PXM\|_{\Pi_X} \leq \|\PDM^n\|_{\Pi}^{1/n} \,.
	\]
	($\|\PDM^n\|_{\Pi}^{1/(n-1)}$ is interpreted as~$0$ when $n = 1$.)
\end{lemma}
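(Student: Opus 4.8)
The plan is to treat the deterministic-scan operators as compositions of two ``half-updates'' and exploit the fact that, although the full operator is not self-adjoint on $L^2_0(\Pi)$, its relevant powers are conjugate to powers of the self-adjoint marginal operators. Introduce the two conditional-expectation-type operators: let $E_X : L^2_0(\Pi) \to L^2_0(\Pi)$ be the operator $f \mapsto \int f(x',y)\,\Pi_{X|Y}(\df x'|y)$ (replace the $X$-coordinate by a draw from its full conditional), and similarly let $E_Y$ replace the $Y$-coordinate. Both are idempotent, self-adjoint, and non-negative definite projections on $L^2_0(\Pi)$, with $\|E_X\|_\Pi = \|E_Y\|_\Pi = 1$. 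The DG kernel factors as $\PD = E_Y E_X$ (first refresh $Y$ given $x$, then $X$ given $y'$), and the DC kernel as $\PDM = Q' E_Y$ where $Q'$ is the lift of the Metropolis step $\Q$ acting on the $X$-coordinate, which is self-adjoint on $L^2_0(\Pi)$ because $\Q$ is reversible with respect to $\Pi_{X|Y}$ for each fixed $y$. The first step is to record these factorizations and the self-adjointness/idempotency facts carefully.

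Next I would connect the norms to the marginal chains. Observe that $\PX$ is precisely the operator on $L^2_0(\Pi_X)$ obtained by restricting $E_X E_Y E_X$ to functions of $x$ alone; more usefully, for a function $g \in L^2_0(\Pi_X)$ viewed inside $L^2_0(\Pi)$ we have $\PD^n g = (E_Y E_X)^n g$, and since $E_X g = g$ one can telescope: $\|\PD^n g\|_\Pi^2 = \langle (E_X E_Y)^n g, (E_Y E_X)^{n} g\rangle \ldots$; the clean way is to note $(\PD^n)^* \PD^n = E_X E_Y E_X \cdots E_X E_Y E_X$ ($2n$ factors of $E_Y$ alternating with $E_X$'s, sandwiched by $E_X$ on both ends because of the extra adjoint), which equals $\PX^{2n-1}$ as an operator and is self-adjoint non-negative definite. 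Hence $\|\PD^n\|_\Pi^2 = \|(\PD^n)^*\PD^n\|_\Pi = \|\PX^{2n-1}\|_{\Pi_X} = \|\PX\|_{\Pi_X}^{2n-1}$, using non-negative-definiteness of $\PX$ so that operator-norm powers multiply. Taking $(2n-1)$-th roots and then raising to $1/2$ gives $\|\PD^n\|_\Pi^{1/(n-1/2)} = \|\PX\|_{\Pi_X}$; the case $n=1$ gives $\|\PD\|_\Pi^2 = \|\PX\|_{\Pi_X}$ directly, and $\rho(\PD) = \|\PX\|_{\Pi_X}$ follows from the standard fact (cited in the excerpt via \citet{robe:1995, roberts2001markov}) that the DG chain and its $X$-marginal share a convergence rate, or can be rederived from this identity together with the definition of $\rho$. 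The symmetric argument with the roles of $X$ and $Y$ swapped, starting from $\PD = E_Y E_X$ but grouping as $E_Y (E_X E_Y)^{n-1} E_X$, yields $\rho(\PD) = \|\PY\|_{\Pi_Y}$.

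For the DC line, the same bookkeeping gives $(\PDM^n)^* \PDM^n = E_Y Q' E_Y Q' \cdots Q' E_Y$, which is self-adjoint but no longer a pure power of a single operator — instead it relates to $\PXM = $ (restriction of $Q' E_Y Q'$, or equivalently $E_X Q' E_Y Q' E_X$, to $L^2_0(\Pi_X)$, using that $\Q$ already maps into functions of $x$). The point is that one only gets inequalities: $\|\PXM\|_{\Pi_X} = \rho(\PDM)$ is the clean equality (again by the marginal-chain principle and reversibility of $\PXM$), while $\|\PDM^n\|_\Pi$ is squeezed between $\rho(\PDM)^n$ and $\rho(\PDM)^{n-1}$ because $\PDM^n = Q'(E_Y Q')^{n-1} E_Y$ contains $n-1$ ``interior'' marginal-type blocks but the outermost $Q'$ and $E_Y$ need not contract. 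Concretely, $\|\PDM^n\|_\Pi \le \|Q'\|_\Pi \,\|\PXM\|_{\Pi_X}^{n-1}\,\|E_Y\|_\Pi \le \rho(\PDM)^{n-1}$ after identifying the interior blocks with $\PXM$ and using $\|Q'\|_\Pi, \|E_Y\|_\Pi \le 1$; and $\|\PDM^n\|_\Pi \ge \rho(\PDM)^n$ because applying $\PDM^n$ to (near-)extremal functions of $x$ for $\PXM$ loses at most nothing on the marginal. Rearranging gives the displayed two-sided bound with exponents $1/n$ and $1/(n-1)$.

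The main obstacle I anticipate is the non-self-adjointness: one must resist the temptation to write $\|\PD^n\| = \|\PD\|^n$ (false in general) and instead go through $(\PD^n)^*\PD^n$ and a genuinely careful count of how many alternating $E_X, E_Y$ factors appear, getting the exponent $2n-1$ (not $2n$) exactly right — the fact that $E_X^2 = E_X$ is what collapses a factor and produces the ``$-1/2$'' in $n - 1/2$. A secondary subtlety is justifying $\|\PX^{2n-1}\|_{\Pi_X} = \|\PX\|_{\Pi_X}^{2n-1}$, which requires $\PX$ self-adjoint \emph{and} non-negative definite (both recorded in Section~\ref{sec:norms}), and, for the DC case, being honest that only inequalities survive because $Q'$ is not a projection. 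Everything else — invariance, the idempotency of $E_X, E_Y$, and the identification of interior blocks with the marginal kernels — is routine once the factorizations are in place.
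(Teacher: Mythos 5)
Your proposal is correct in substance and, at its core, is the paper's own argument recast in the language of orthogonal projections: your identity $\|\PD^n\|_{\Pi}^2 = \|(\PD^n)^*\PD^n\|_{\Pi}$ together with the collapse $E_X^2=E_X$ is exactly the paper's computation $\langle h, \PY^{2n-1}h\rangle_{\Pi_Y} = \|\PD^n f_h\|_{\Pi}^2$ and $\|g_f\|_{\Pi_X}^2 = \langle h_f,\PY h_f\rangle_{\Pi_Y} \le \|\PY\|_{\Pi_Y}$, which is where the extra half-power comes from; the paper just writes out $h_f = E_X f$ and $g_f = E_Y E_X f$ explicitly rather than invoking the $C^*$-identity. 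Your packaging is arguably cleaner and makes the connection to the two-projections theory (which the paper only mentions in a remark) explicit. Two points of bookkeeping: first, $(\PD^n)^*\PD^n = E_X(E_YE_X)^{2n-1}$ is sandwiched by $E_X$, whose range is the space of functions of $y$, so its restriction is $\PY^{2n-1}$, not $\PX^{2n-1}$ --- harmless, since $\|\PX\|_{\Pi_X}=\|\PY\|_{\Pi_Y}$, but worth getting right since that equality (Liu--Wong--Kong) is itself one of the links in the chain. Second, your word for $(\PDM^n)^*\PDM^n$ omits the non-collapsing $Q'^2$ in the middle; this does not matter because the two inequalities you actually need follow directly from the factorization $\PDM^n f = \PXM^{n-1}(\PDM f)$ with $\PDM f$ a function of $x$ of norm at most one (upper bound), and from $\PDM^n|_{V_X} = \PXM^n$ with $\|\PXM^n\|_{\Pi_X}=\|\PXM\|_{\Pi_X}^n$ by self-adjointness (lower bound).

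The one genuine gap is the link $\rho(\PD)=\|\PX\|_{\Pi_X}$ (and likewise $\rho(\PDM)=\|\PXM\|_{\Pi_X}$), which you outsource to a ``marginal-chain principle.'' Only the direction $\rho(\PD)\le\|\PX\|_{\Pi_X}$ falls out of your norm identity for free, via $\|\mu\PD^n-\Pi\|_{\Pi}\le\|\mu-\Pi\|_{\Pi}\,\|\PD^n\|_{\Pi}$; the reverse direction $\rho(\PD)\ge\rho(\PX)=\|\PX\|_{\Pi_X}$ requires exhibiting initial distributions for the joint chain whose marginals track the marginal chain, which the paper does explicitly (part (iii) of its proof) by comparing $\|\mu\PX^{n}-\Pi_X\|_{\Pi_X}$ with $\|\tilde\mu\PD^n-\Pi\|_{\Pi}$ for $\tilde\mu$ any lift of $\mu$, and then appealing to Lemma~\ref{lem:roberts} for the reversible marginal kernel. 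If you intend the lemma to be self-contained rather than citation-dependent, you need to supply that step.
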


The surprising exponent $1/(n-1/2)$ in the above lemma has also
appeared in related results on alternating projections
\citep[][Theorem 2]{kayalar1988error}.

Applying Lemmas~\ref{lem:roberts} and~\ref{lem:marginal} we obtain the
following.

\begin{corollary}
	\label{cor:marginal}
	$\rhoD = \rho(\PX) = \rho(\PY)$ and $\rhoDM = \rho(\PXM)$.
\end{corollary}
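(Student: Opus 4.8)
The plan is to derive the corollary directly from Lemmas~\ref{lem:roberts} and~\ref{lem:marginal}, with essentially no additional analytic work: all of the substance lives in the proof of Lemma~\ref{lem:marginal} (deferred to Appendix~\ref{app:marginal}), and the corollary merely repackages its operator-norm identities as statements about convergence rates.

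First I would invoke the reversibility facts already recorded in Section~\ref{sec:mtk}: $\PX$ and $\PY$ are reversible with respect to $\Pi_X$ and $\Pi_Y$, respectively, and $\PXM$ is reversible with respect to $\Pi_X$. Lemma~\ref{lem:roberts} then applies to each of these three kernels and gives $\rho(\PX) = \|\PX\|_{\Pi_X}$, $\rho(\PY) = \|\PY\|_{\Pi_Y}$, and $\rho(\PXM) = \|\PXM\|_{\Pi_X}$. Chaining these with the identities $\rho(\PD) = \|\PX\|_{\Pi_X} = \|\PY\|_{\Pi_Y}$ and $\rho(\PDM) = \|\PXM\|_{\Pi_X}$ supplied by Lemma~\ref{lem:marginal} yields $\rhoD = \rho(\PX) = \rho(\PY)$ and $\rhoDM = \rho(\PXM)$, which is exactly the claim.

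I do not anticipate a real obstacle here; the one point deserving care is simply that the cited reversibility statements are precisely what Lemma~\ref{lem:roberts} requires, so that the identification of each marginal chain's rate with its operator norm is legitimate. The genuinely nontrivial content — that the non-reversible kernels $\PD$ and $\PDM$ have convergence rates expressible through reversible \emph{marginal} operators — is carried entirely by Lemma~\ref{lem:marginal}, not by this corollary, so the corollary itself is a short deduction.
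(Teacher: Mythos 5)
Your proposal is correct and is precisely the paper's route: the corollary is stated as a direct consequence of Lemmas~\ref{lem:roberts} and~\ref{lem:marginal}, using the reversibility of $\PX$, $\PY$, and $\PXM$ with respect to the appropriate marginals to identify each $\rho(\cdot)$ with the corresponding operator norm and then chaining with the norm identities of Lemma~\ref{lem:marginal}. Nothing further is needed.
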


For $g \in L_0^2(\Pi_X)$ and $h \in L_0^2(\Pi_Y)$, let
\[
\gamma(g,h) = \int_{\X \times \Y} g(x) h(y) \Pi(\df x, \df y) 
\]
and
\[
\bar{\gamma} = \sup \{ \gamma(g,h) : \, g \in L_0^2(\Pi_X), \|g\|_{\Pi_X} = 1, h \in L_0^2(\Pi_Y), \|h\|_{\Pi_Y} = 1 \}.
\]
We say that $\bar{\gamma} \in [0,1]$ is the maximal correlation
between~$X$ and~$Y$.  The following result can be found in Theorem 3.2
of \cite{liu:etal:1994}; see also \cite{liu1995covariance} and
\cite{vidav1977norm}.

\begin{lemma} \label{lem:maximal}
	\[
	\bar{\gamma}^2 = \|\PX\|_{\Pi_X} = \|\PY\|_{\Pi_Y} \,.
	\]
\end{lemma}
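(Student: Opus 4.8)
The plan is to identify the operator $\PX$ on $L_0^2(\Pi_X)$ with a composition of two conditional-expectation operators and then recognize $\bar\gamma$ as the norm of one of those factors. Concretely, define $E_{Y|X}: L_0^2(\Pi_X) \to L_0^2(\Pi_Y)$... wait, the more natural pair is this: let $S: L_0^2(\Pi_Y)\to L_0^2(\Pi_X)$ be the operator $(Sh)(x) = \int_\Y h(y)\,\Pi_{Y|X}(\df y|x)$, i.e.\ $Sh = \mathbb{E}[h(Y)\mid X=\cdot]$, and let $T: L_0^2(\Pi_X)\to L_0^2(\Pi_Y)$ be $(Tg)(y) = \int_\X g(x)\,\Pi_{X|Y}(\df x|y)$. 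One checks directly from the definition of $\PX$ that $\PX = S\,T$ as operators on $L_0^2(\Pi_X)$ (integrate out $y$: $(STg)(x) = \int_\Y \big(\int_\X g(x')\Pi_{X|Y}(\df x'|y)\big)\Pi_{Y|X}(\df y|x) = (\PX g)(x)$). First I would record that $S$ and $T$ are adjoints of one another with respect to the two inner products, which follows from Fubini applied to $\langle Tg, h\rangle_{\Pi_Y} = \int g(x)h(y)\,\Pi(\df x,\df y) = \langle g, Sh\rangle_{\Pi_X}$; this same computation shows $\|T\| = \|S\| = \sup\{\langle Tg,h\rangle_{\Pi_Y}\} = \bar\gamma$, since $\langle Tg, h\rangle_{\Pi_Y} = \gamma(g,h)$.

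With $T^* = S$ in hand, the identity $\PX = S T = T^* T$ gives, by the standard $C^*$-identity for the operator norm, $\|\PX\|_{\Pi_X} = \|T^*T\|_{\Pi_X} = \|T\|^2 = \bar\gamma^2$. The analogous argument with the roles of $X$ and $Y$ swapped — using $\PY = T S = S^* S$ — yields $\|\PY\|_{\Pi_Y} = \|S\|^2 = \bar\gamma^2$, so all three quantities coincide. (Alternatively, once $\|\PX\|_{\Pi_X}=\|\PY\|_{\Pi_Y}$ is known from Lemma~\ref{lem:marginal}, only one of the two $C^*$-identity computations is strictly needed.) A minor point to handle carefully: one must verify $\bar\gamma\le 1$ so that the claimed range $[0,1]$ is correct, but this is immediate from Cauchy–Schwarz, $|\gamma(g,h)| = |\mathbb{E}[g(X)h(Y)]| \le \|g\|_{\Pi_X}\|h\|_{\Pi_Y}$.

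The only genuinely delicate step is the justification that $S$ and $T$ are everywhere-defined bounded operators between the stated $L^2_0$ spaces — i.e.\ that conditional expectation does not enlarge the $L^2$ norm and preserves the zero-mean property. That conditional expectation is an $L^2$ contraction is classical (it is an orthogonal projection in the full $L^2(\Pi)$ picture, restricted appropriately), and $\omega f = 0$ is preserved because $\Pi_X(Sh) = \int h\,\df\Pi_Y = \Pi_Y h = 0$ by the tower property; so this obstacle is really bookkeeping rather than a serious difficulty. I would also note that self-adjointness and non-negative-definiteness of $\PX = T^*T$ on $L_0^2(\Pi_X)$ fall out of the same factorization, which is consistent with the properties already asserted in Section~\ref{sec:norms}. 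Since the lemma is attributed to \cite{liu:etal:1994} (with \cite{vidav1977norm} and \cite{liu1995covariance} as further references), I would likely present this factorization argument compactly and cite those sources for the result itself rather than belabor the measure-theoretic details.
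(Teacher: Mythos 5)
Your proof is correct. The paper does not prove this lemma itself---it simply cites Theorem~3.2 of Liu, Wong and Kong (1994)---and your factorization $\PX = T^*T$ (with $S = T^*$ verified via Fubini, $\|T\| = \bar{\gamma}$ from the identity $\langle Tg,h\rangle_{\Pi_Y} = \gamma(g,h)$, and the $C^*$-identity $\|T^*T\| = \|T\|^2$) is precisely the standard argument underlying that citation, with the bookkeeping on contractivity and mean-zero preservation of conditional expectation handled correctly.
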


\subsection{Total variation}
\label{sec:tv}

We consider the connection between $L^2$ geometric ergodicity and the
usual notion of geometric ergodicity defined through the total
variation norm, denoted by $\|\cdot \|_{\tiny\mbox{TV}}$. For the four
component-wise Markov chains considered here we can use results from
\citet{roberts2001geometric} to show that these concepts are
equivalent \citep[see also][]{roberts1997geometric}.  A proof is
provided in Appendix~\ref{app:l1l2}.

\begin{proposition} 
	\label{pro:l1l2} 
	Let $P$ denote the Mtk for any of the DG, RG, DC, and RC
	Markov chains.  Suppose that the $\sigma$-algebras $\mathcal{F}_X$ and $\mathcal{F}_Y$ are countably generated, and~$P$ is $\varphi$-irreducible.  Then $P$
	is $L^2$-geometrically ergodic if and only if it is $\Pi$-almost
	everywhere geometrically ergodic in the sense that, for $\Pi$-almost
	every $(x,y)$, there exist $C (x,y)$ and $t < 1$ such that, for all
	$n$,
	\[
	\|P^n((x,y), \cdot) - \Pi(\cdot)\|_{\tiny\mbox{TV}} \leq C(x,y) t^n .
	\]
\end{proposition}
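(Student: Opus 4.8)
The plan is to reduce everything to the classical equivalence, valid for a \emph{reversible}, $\varphi$-irreducible Markov chain on a countably generated state space, between $L^2$-geometric ergodicity and $\Pi$-almost everywhere geometric ergodicity in total variation; this is the content of \citet{roberts2001geometric} (see also \citet{roberts1997geometric}). For the random-scan kernels there is nothing more to do. Since $\mathcal{F}_X$ and $\mathcal{F}_Y$ are countably generated, so is $\mathcal{F}_X \times \mathcal{F}_Y$; by hypothesis $\PR$ and $\PRM$ are $\varphi$-irreducible; and both are reversible with respect to $\Pi$ (recall $\PR$ is in fact non-negative definite). Hence the cited equivalence applies to $\PR$ and to $\PRM$ on $\X \times \Y$ directly.

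The deterministic-scan kernels $\PD$ and $\PDM$ are \emph{not} reversible with respect to $\Pi$, and circumventing this is the main point. I would pass to the reversible marginal chains $\PX$ (reversible with respect to $\Pi_X$, and non-negative definite by \citet[Lemma~3.2]{liu:etal:1994}) and $\PXM$ (reversible with respect to $\Pi_X$). The key observation is that when $(X_n,Y_n)$ evolves according to $\PD$ (respectively $\PDM$), the marginal sequence $\{X_n\}_{n\ge 0}$ is itself a Markov chain with transition kernel $\PX$ (respectively $\PXM$), and its one-step transition does not depend on $Y_0$; thus $\PD^n((x,y), A\times\Y) = \PX^n(x,A)$ and $\PDM^n((x,y), A\times\Y) = \PXM^n(x,A)$ for all $(x,y)$, all $A\in\mathcal{F}_X$, and all $n$. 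Consequently $\varphi$-irreducibility of $\PD$ (respectively $\PDM$) descends to $\PX$ (respectively $\PXM$) by pushing the irreducibility measure forward onto its $\X$-marginal, so \citet{roberts2001geometric} applies to $\PX$ and to $\PXM$: each is $L^2$-geometrically ergodic if and only if it is $\Pi_X$-almost everywhere geometrically ergodic in total variation.

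It then remains to transfer both properties between a deterministic-scan chain and its marginal. On the $L^2$ scale this is Corollary~\ref{cor:marginal}, which gives $\rhoD = \rho(\PX)$ and $\rhoDM = \rho(\PXM)$, so $\PD$ (respectively $\PDM$) is $L^2$-geometrically ergodic precisely when $\PX$ (respectively $\PXM$) is. On the total-variation scale, a deterministic-scan chain and its marginal are simultaneously geometrically ergodic: this is the well-known equivalence cited in Section~\ref{sec:basic} (\citet{robe:1995, roberts2001markov}, and \citet[Section~2.4]{jones2014convergence} for the conditional Metropolis--Hastings version). It can also be seen directly: one step of the $\PD$ dynamics is a Markov kernel that sends $\PX^{n-1}(x,\cdot)$ to $\PD^n((x,y),\cdot)$ and sends $\Pi_X$ to $\Pi$, so the total-variation distance of $\PD^n((x,y),\cdot)$ to $\Pi$ is at most that of $\PX^{n-1}(x,\cdot)$ to $\Pi_X$; conversely, marginalizing over $\Y$ turns $\PD^n((x,y),\cdot)$ into $\PX^n(x,\cdot)$ and cannot increase total-variation distance. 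Reconciling the ``$\Pi$-almost everywhere'' and ``$\Pi_X$-almost everywhere'' qualifiers is a routine application of Fubini's theorem. Chaining these three equivalences (and their analogues for $\PDM$ and $\PXM$) completes the proof for the deterministic-scan chains.

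The step I expect to demand the most care is the passage to the marginal chains: checking that a deterministic-scan chain and its marginal share their qualitative convergence behaviour on \emph{both} scales, and that the marginal chains inherit the $\varphi$-irreducibility (and aperiodicity) needed by the reversible-case equivalence. The remaining arguments are bookkeeping with results already in hand.
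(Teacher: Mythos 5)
Your proof is correct and follows essentially the same route as the paper's: apply the reversible-case equivalence of \citet{roberts2001geometric} directly to $\PR$ and $\PRM$, and handle $\PD$ and $\PDM$ by passing to the reversible marginal chains $\PX$ and $\PXM$, transferring $L^2$-geometric ergodicity via Corollary~\ref{cor:marginal} and checking that the marginal chain inherits $\varphi$-irreducibility. The only cosmetic difference is that the paper obtains the direction ``$L^2$ geometric ergodicity implies almost-everywhere geometric ergodicity'' for the non-reversible deterministic-scan kernels directly from Theorem~1 of \citet{roberts2001geometric} (which does not need reversibility), whereas you route both directions through the marginal chain; both work.
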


Direct applications of Theorem~2 of \cite{roberts2001geometric} yield analogous results for the marginal chains defined by $\PX$, $\PY$, and $\PXM$.


\section{Quantitative Relationship between $\rhoD$ and $\rhoR$} 
\label{sec:quantitative}

\subsection{Main result}

The proof of the following is given in Section~\ref{ssec:proof}.
\begin{theorem} 
	\label{thm:main}
	\[
	\rhoR = \frac{1 + \sqrt{1 - 4r(1-r)[1-\rhoD]}}{2} \,.
	\]
\end{theorem}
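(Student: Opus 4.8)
The plan is to realize both $\rhoR$ and $\rhoD$ as quantities attached to a single pair of orthogonal projections, and then prove the identity at the operator level. Let $\mathsf{A}$ and $\mathsf{B}$ be the conditional-expectation operators on $L_0^2(\Pi)$, namely
\[
(\mathsf{A}f)(x,y) = \int_{\X} f(x',y)\,\Pi_{X|Y}(\df x'|y) , \qquad (\mathsf{B}f)(x,y) = \int_{\Y} f(x,y')\,\Pi_{Y|X}(\df y'|x) ,
\]
each of which is an orthogonal projection on $L_0^2(\Pi)$ (onto the mean-zero functions of $Y$, respectively of $X$). A one-line computation from the RG kernel gives $\PR = r\mathsf{A} + (1-r)\mathsf{B}$ as an operator on $L_0^2(\Pi)$, so Lemma~\ref{lem:roberts} yields $\rhoR = \|r\mathsf{A} + (1-r)\mathsf{B}\|_{\Pi}$. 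On the other hand one checks that $\bar{\gamma} = \|\mathsf{A}\mathsf{B}\|_{\Pi}$ (the cosine of the minimal angle between the ranges), so by Lemma~\ref{lem:maximal} together with Corollary~\ref{cor:marginal}, $\rhoD = \bar{\gamma}^2 = \|\mathsf{A}\mathsf{B}\|_{\Pi}^2$. Writing $c = \|\mathsf{A}\mathsf{B}\|_{\Pi} = \sqrt{\rhoD}$ and $M = r\mathsf{A}+(1-r)\mathsf{B}$ (self-adjoint, with $0 \le M \le I$, so that $\mu := \|M\|_{\Pi} = \max\sigma(M)$), the theorem reduces to proving
\[
\mu = \frac{1 + \sqrt{1 - 4r(1-r)(1-c^2)}}{2} =: \lambda(c)
\]
for an arbitrary pair of orthogonal projections $\mathsf{A},\mathsf{B}$ with $\|\mathsf{A}\mathsf{B}\| = c$. (The quadratic $\lambda^2-\lambda+r(1-r)(1-c^2)$, whose larger root is $\lambda(c)$, recurs in both directions below; it is ultimately traceable to the identity $M(I-M) = r(1-r)(\mathsf{A}-\mathsf{B})^2$, though that identity alone does not suffice.)

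For the upper bound $\mu \le \lambda(c)$, fix approximate eigenvectors: $f_n$ with $\|f_n\|_{\Pi}=1$ and $\|Mf_n - \mu f_n\|_{\Pi}\to 0$. Applying $\mathsf{A}$, then $\mathsf{B}$, to $r\mathsf{A}f_n + (1-r)\mathsf{B}f_n = \mu f_n + o(1)$, using idempotence and self-adjointness of $\mathsf{A}$ and $\mathsf{B}$, and taking inner products with $\mathsf{B}f_n$, one obtains
\[
r(1-r)\,\|\mathsf{A}\mathsf{B}f_n\|_{\Pi}^2 \;-\; (\mu-r)(\mu-1+r)\,\|\mathsf{B}f_n\|_{\Pi}^2 \;\longrightarrow\; 0 .
\]
Since $\|\mathsf{A}\mathsf{B}f_n\|_{\Pi} \le c\,\|\mathsf{B}f_n\|_{\Pi}$, pass to a subsequence along which $\|\mathsf{B}f_n\|_{\Pi}$ converges. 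If the limit is positive, divide through to get $(\mu-r)(\mu-1+r) \le r(1-r)c^2$, i.e. $\mu^2-\mu+r(1-r)(1-c^2) \le 0$, hence $\mu \le \lambda(c)$. If the limit is $0$, then $r\mathsf{A}f_n - \mu f_n \to 0$, and taking norms (with $\|\mathsf{A}f_n\|_{\Pi} \le 1$) forces $\mu \le r \le \lambda(c)$, since $\lambda(\cdot)$ is increasing with $\lambda(0) = \max\{r,1-r\}$.

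For the lower bound $\mu \ge \lambda(c)$, the cases $c=0$ and $c=1$ are immediate from Assumption~\ref{as:1}: a unit element of $L_0^2(\Pi_Y)$ (viewed inside $L_0^2(\Pi)$) gives $\langle Mf,f\rangle_{\Pi} \ge r$, and symmetrically $\ge 1-r$, so $\mu \ge \max\{r,1-r\} = \lambda(0)$; when $c=1$ one similarly gets $\mu \ge 1 = \lambda(1)$. For $0<c<1$, choose unit $h_n$ in the range of $\mathsf{B}$ with $\mathsf{B}\mathsf{A}\mathsf{B}h_n \to c^2 h_n$ (possible because $\mathsf{B}\mathsf{A}\mathsf{B} = (\mathsf{A}\mathsf{B})^*(\mathsf{A}\mathsf{B})$ has $c^2 = \max\sigma$), and set $g_n = \mathsf{A}h_n/\|\mathsf{A}h_n\|_{\Pi}$, a unit vector in the range of $\mathsf{A}$. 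One verifies $\langle g_n,h_n\rangle_{\Pi}\to c$ and, for fixed $\alpha,\beta\in\mathbb{R}$ and $f_n = \alpha g_n + \beta h_n$, that $\|\mathsf{A}f_n\|_{\Pi}^2 \to (\alpha+\beta c)^2$, $\|\mathsf{B}f_n\|_{\Pi}^2 \to (\alpha c+\beta)^2$, and $\|f_n\|_{\Pi}^2 \to \alpha^2+2\alpha\beta c+\beta^2$. Hence $\mu \ge [r(\alpha+\beta c)^2 + (1-r)(\alpha c+\beta)^2]/[\alpha^2+2\alpha\beta c+\beta^2]$ for all $(\alpha,\beta)\neq 0$, and the supremum of this Rayleigh quotient is the larger root of a $2\times 2$ generalized eigenvalue problem, which a direct computation identifies with $\lambda(c)$. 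Combining the two bounds gives $\rhoR = \mu = \lambda(\sqrt{\rhoD})$, which recovers~\eqref{eq:quantitative}.

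I expect the main obstacle to be the upper bound: passing from the approximate-eigenvector relation to the scalar inequality, and in particular treating the degenerate subsequence $\|\mathsf{B}f_n\|_{\Pi}\to 0$ cleanly — this is precisely the part of $L_0^2(\Pi)$ on which $\mathsf{A}$ and $\mathsf{B}$ essentially commute. An alternative that avoids these case distinctions is to invoke the structure theorem for a pair of orthogonal projections, reducing $L_0^2(\Pi)$ to a direct integral of $2\times 2$ blocks (plus one-dimensional pieces) and diagonalizing $M$ blockwise; the argument above instead stays within the Hilbert-space tools already used in the paper.
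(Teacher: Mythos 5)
Your proposal is correct; I checked the key displayed limit in your upper bound (it follows from pairing $Mf_n-\mu f_n\to 0$ and $\mathsf{A}(Mf_n-\mu f_n)\to 0$ against $\mathsf{B}f_n$ and using $\langle \mathsf{A}\mathsf{B}f_n,\mathsf{B}f_n\rangle_{\Pi}=\|\mathsf{A}\mathsf{B}f_n\|_{\Pi}^2$), and the determinant of your $2\times 2$ generalized eigenvalue problem factors as $(1-c^2)[\lambda^2-\lambda+r(1-r)(1-c^2)]$, so the Rayleigh quotient does peak at $\lambda(c)$. The route is recognizably the same in spirit as the paper's --- both arguments hinge on $\rho(\PR)$ being an approximate eigenvalue of a non-negative definite operator and both land on the quadratic $\lambda^2-\lambda+r(1-r)(1-\bar{\gamma}^2)\leq 0$ --- but your technical execution is genuinely different and leaner. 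The paper restricts $\PR$ to the subspace $H=\{g\oplus h\}$, which forces it to prove that $H$ is closed, that the decomposition behaves well under limits, and that $\|\PR\|_{\Pi}=\|\PR|_H\|_{\Pi}$ (Lemmas~\ref{lem:H-closed}, \ref{lem:H-limit}, \ref{lem:H}), all of which require the side condition $\bar{\gamma}<1$; you work with the two orthogonal projections $\mathsf{A},\mathsf{B}$ directly on $L_0^2(\Pi)$ and extract scalar relations by taking inner products, so none of that machinery is needed, at the cost of an explicit subsequence argument for the degenerate case $\|\mathsf{B}f_n\|_{\Pi}\to 0$ (which the paper's $H$-restriction implicitly screens out). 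For the lower bound, the paper guesses the optimal coefficient $a$ in $\hgn\oplus aQ_2\hgn$ and verifies it; you derive it as the maximizer of a $2\times 2$ Rayleigh quotient, which is more systematic and makes the origin of $\lambda(c)$ transparent. This is essentially the ``theory of two projections'' proof that the paper's remark in Section~\ref{ssec:proof} alludes to for $r=1/2$, carried out for general $r$.
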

We illustrate Theorem~\ref{thm:main} in two examples.	
\begin{figure}[h]
	\centering
	\includegraphics[width=0.8\textwidth]{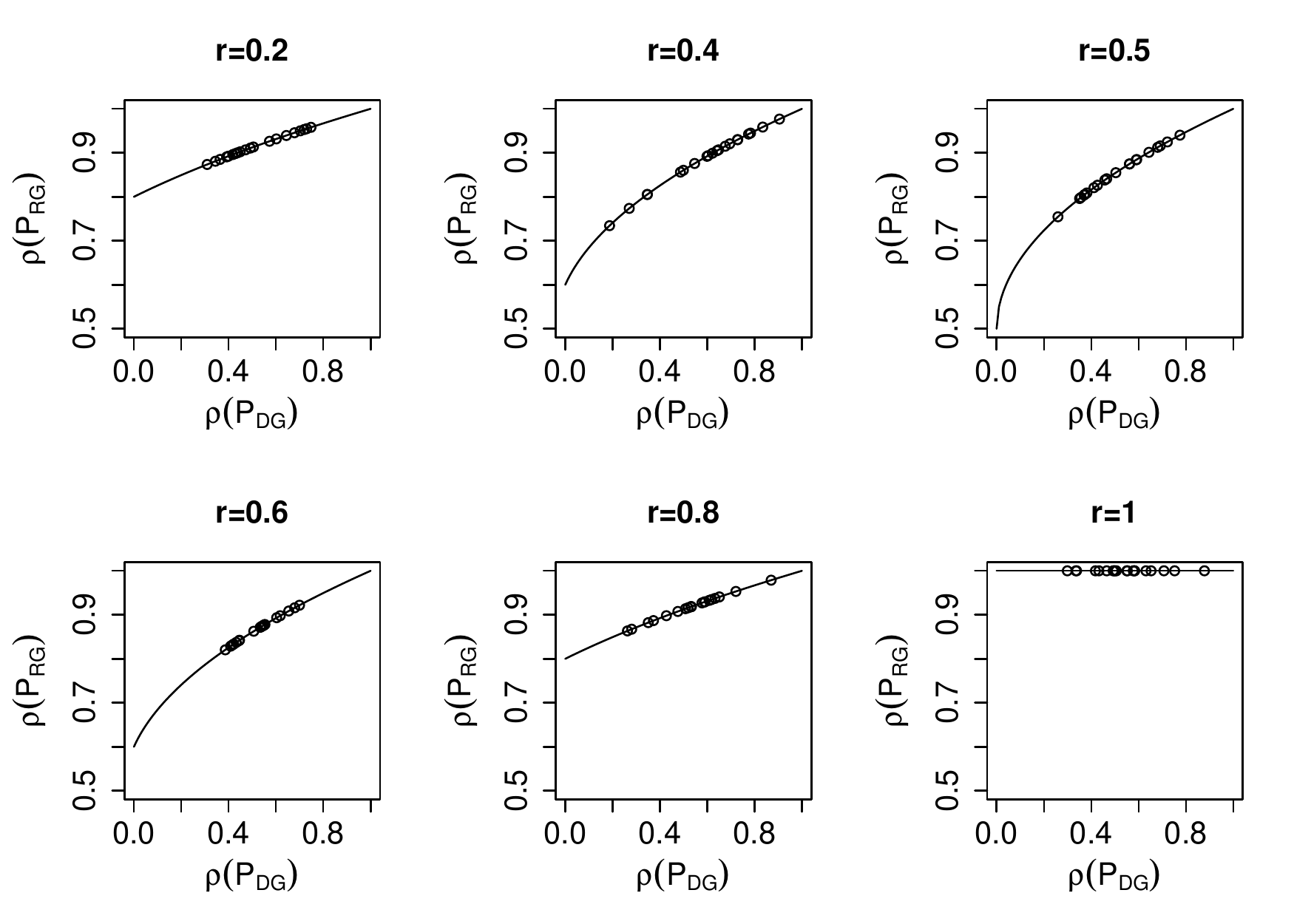}
	\caption{
		Relationship between~$\rhoD$ and~$\rhoR$ for discrete target
		distributions.  
		In each subplot, 20 joint pmfs are randomly
		generated using Dirichlet distributions with random concentration parameters.  
		Each circle corresponds
		to a joint pmf.  
		The solid curves depict the relationship given in
		Theorem~\ref{thm:main}.} \label{fig:randomscanline}
\end{figure}

\begin{example}
	When~$\Pi$ is Gaussian, there are explicit formulas for~$\rhoD$
	and~$\rhoR$ \citep{amit1996convergence,roberts1997updating}.  In
	particular, when~$\Pi$ is a bivariate Gaussian, and the correlation
	between~$X$ and~$Y$ is $\gamma \in [-1,1]$, it is well-known that
	$\rho(\PD) = \gamma^2$ \citep[see, e.g.,][Section~4.3]{diaconis2008gibbs}.
	Meanwhile,
	\[
	\rho(\PR) = \frac{1 + \sqrt{1 - 4r(1-r)(1-\gamma^2)}}{2}
	\]
	\citep[][page 193]{levine2008comment}. This is in accordance with the general
	result in Theorem~\ref{thm:main}.
\end{example}	

\begin{example}
	When $\X \times \Y$ is a finite set,~$\Pi$ can be written in the
	form of a probability mass function (pmf),~$\pi(\cdot,\cdot)$.  For
	illustration, take $\X = \Y = \{1,2,3,4,5\}$, and generate the
	elements of $\pi(i,j), \, (i,j) \in \X \times \Y,$ via a Dirichlet
	distribution.  The convergence rates of DG and RG samplers can then
	be calculated using the second largest eigenvalue in modulus of their transition matrices.  We
	repeat this experiment 20 times for different values of selection
	probabilities.  The results are displayed in
	Figure~\ref{fig:randomscanline}.
\end{example}

We now turn our attention to some of the implications of
Theorem~\ref{thm:main}.  Notice that, given the selection probability
$r \in (0,1)$,~$\rhoR$ and~$\rhoD$ are monotonic functions of each
other.  Indeed, in light of Lemmas~\ref{lem:marginal}
and~\ref{lem:maximal}, given any selection probability, the
convergence rates of the two types of Gibbs chains are completely
determined by the maximal correlation between~$X$ and~$Y$. 
$\rhoD = \bar{\gamma}^2 = 0$ if and only if $\rhoR = \max \{r, 1-r\}$;
$\rhoD = \bar{\gamma}^2 = 1$ if and only if $\rhoR = 1$; and when
$\rhoD = \bar{\gamma}^2 \in (0,1)$, $\rhoR \in (\max \{r, 1-r\},1)$
and $\rhoD < \rhoR$.

Let $k_* > 0$ be such that $\rhoR^{k_*} = \rhoD$ so that, roughly
speaking, one iteration of the DG sampler is ``worth"
$k_*$ iterations of the RG sampler in terms of convergence
rate.  By Young's inequality,
\[
\begin{aligned}
\rhoR &= \frac{1 + \sqrt{ 1-4r(1-r) + 4r(1-r)\rhoD }}{2} \\
& \geq [1-4r(1-r) + 4r(1-r)\rhoD]^{1/4} \\
& \geq \rhoD^{r(1-r)} .
\end{aligned}
\]
Therefore, $k_* \geq 1/[r(1-r)]$.

Let $t_1$ and $t_2$ be the time it takes to sample from $\Pi_{X|Y}$
and $\Pi_{Y|X}$, respectively.  For simplicity, assume that they are
constants.  Suppose that, within unit time, one can run
$k_{\tiny \mbox{D}}$ iterations of the DG sampler, and
$k_{\tiny\mbox{R}}$ iterations of the RG sampler.  Then
\[
\frac{k_{\tiny\mbox{R}}}{k_{\tiny \mbox{D}}} = \frac{t_1 + t_2}{rt_1 + (1-r)t_2} \leq \frac{1}{\min\{r,1-r\}} \leq \frac{1}{r(1-r)}  \,.
\]
Since $k_* \geq 1/[r(1-r)]$,
\[
\rhoD^{k_{\tiny \mbox{D}}} = \rhoR^{k_* k_{\tiny \mbox{D}}} =
\exp \left\{ [\log \rhoR ] k_* k_{\tiny\mbox{R}} \frac{rt_1 +
	(1-r)t_2}{t_1 + t_2} \right\} \leq \rhoR^{k_{\tiny\mbox{R}}} \,.
\]
In this sense, the DG sampler converges faster than
its random-scan counterpart.

\begin{remark}
\label{rem:more}
  It is natural to consider whether Theorem~\ref{thm:main} can be
  extended to the case where there are more than two components.  This
  proves to be challenging.  Two-component deterministic-scan Gibbs
  samplers have reversible marginal chains and hence $\PX$ and
  $\PY$, as linear operators, are self-adjoint.  Consequentially,
  there are results like Lemma~\ref{lem:marginal}, which link the
  convergence rate $\rho(\PD)$ to the norms of $\PX$ and $\PY$.
  However, deterministic-scan Gibbs samplers with more than two
  components do not possess reversible marginal chains.  Therefore,
  techniques employed herein cannot be readily applied to compare the
  convergence rates of DG and RG samplers when the number of
  components exceed two. 
\end{remark}

\subsection{Proof of Theorem~\ref{thm:main}} 
\label{ssec:proof}

\begin{remark}
	Before we begin the proof, we note that the result of
	Theorem~\ref{thm:main} is related to the theory of two projections
	\citep{bottcher2010gentle}.
	When $r=1/2$, an alternative proof of Theorem~\ref{thm:main} is
	available if we apply results on the norm of the sum of two
	projections \citep[e.g.,][Theorem 7]{duncan1976norm} along with Lemma~\ref{lem:marginal}.
\end{remark}

By Lemmas~\ref{lem:marginal} and~\ref{lem:maximal},
$\rho(\PD) = \bar{\gamma}^2$, where $\bar{\gamma} \in [0,1]$ is the
maximal correlation between~$X$ and~$Y$.  To prove
Theorem~\ref{thm:main}, we need to connect $\rho(\PR)$
to~$\bar{\gamma}$. We begin with a preliminary result.
\begin{lemma} \label{lem:lowerbound}
	\[
	\rhoR \geq \max \{ 1- r + r \bar{\gamma}^2, r + (1-r)
	\bar{\gamma}^2 \} \,.
	\]
\end{lemma}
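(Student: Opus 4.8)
The plan is to bound $\rho(\PR)$ from below by evaluating the quadratic form of the operator $\PR$ on a convenient two-parameter family of test functions. Since $\PR$ is reversible with respect to $\Pi$, Lemma~\ref{lem:roberts} gives $\rho(\PR) = \|\PR\|_{\Pi} = \sup\{|\langle \PR f, f\rangle_{\Pi}| : f \in L_0^2(\Pi),\ \|f\|_{\Pi} = 1\}$, so any single admissible $f$ yields a lower bound on $\rho(\PR)$. The key observation is that functions depending on only one of the two coordinates are especially easy to push through $\PR$, and they connect the random-scan operator to the marginal operators $\PX$ and $\PY$, whose norms are $\bar{\gamma}^2$ by Lemma~\ref{lem:maximal}.

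Concretely, I would fix $h \in L_0^2(\Pi_Y)$ with $\|h\|_{\Pi_Y} = 1$ and take $f(x,y) = h(y)$; since the $Y$-marginal of $\Pi$ is $\Pi_Y$, we have $f \in L_0^2(\Pi)$ and $\|f\|_{\Pi} = 1$. Writing $g(x) := \int_{\Y} h(y')\,\Pi_{Y|X}(\df y'|x)$ for the relevant conditional expectation, the explicit form of $\PR$ gives $\PR f(x,y) = r\,h(y) + (1-r)\,g(x)$, and hence, after integrating against $\Pi$ and using the tower property,
\[
\langle \PR f, f\rangle_{\Pi} = r\,\|h\|_{\Pi_Y}^2 + (1-r)\,\|g\|_{\Pi_X}^2 = r + (1-r)\,\langle \PY h, h\rangle_{\Pi_Y},
\]
where the last equality uses the identity $\langle \PY h, h\rangle_{\Pi_Y} = \|g\|_{\Pi_X}^2$ (again a tower-property computation, since $\PY h(y) = \int_{\X} g(x)\,\Pi_{X|Y}(\df x|y)$). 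Taking the supremum over all such $h$, and using that $\PY$ is self-adjoint and non-negative definite so that $\sup_{\|h\|_{\Pi_Y}=1}\langle \PY h, h\rangle_{\Pi_Y} = \|\PY\|_{\Pi_Y} = \bar{\gamma}^2$ by Lemma~\ref{lem:maximal}, I obtain $\rho(\PR) \geq r + (1-r)\bar{\gamma}^2$. The symmetric argument, with $f(x,y) = g(x)$ for $g \in L_0^2(\Pi_X)$ and the roles of $(\PX,\Pi_X)$ and $(\PY,\Pi_Y)$ interchanged, gives $\rho(\PR) \geq 1 - r + r\bar{\gamma}^2$. Combining the two bounds yields the stated maximum.

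No genuine obstacle is anticipated; the lemma is a lower bound, so it suffices to test against coordinate functions rather than all of $L_0^2(\Pi)$. The only delicate part is the bookkeeping with conditional expectations: verifying that $\PR$ applied to a coordinate function produces the claimed mixture $r\,h(y) + (1-r)\,g(x)$, checking admissibility of the test functions (which is immediate because the marginals of $\Pi$ are $\Pi_X$ and $\Pi_Y$), and confirming that the cross term $\int_{\X\times\Y} g(x)h(y)\,\Pi(\df x,\df y)$ equals both $\|g\|_{\Pi_X}^2$ and $\langle \PY h, h\rangle_{\Pi_Y}$. Once these routine identifications are in place, the bound follows by taking suprema and applying Lemmas~\ref{lem:roberts} and~\ref{lem:maximal}.
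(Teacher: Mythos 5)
Your proof is correct and follows essentially the same route as the paper: test the quadratic form of $\PR$ on functions of a single coordinate, identify the result as an affine combination of $1$ and the quadratic form of a marginal operator ($\PX$ or $\PY$), and take the supremum using non-negative definiteness together with Lemma~\ref{lem:maximal}. The only cosmetic difference is that the paper computes $\langle \PR f_g, f_g\rangle_{\Pi} = r\langle \PX g, g\rangle_{\Pi_X} + 1 - r$ directly, while you route the same identity through $\|Q_1 h\|_{\Pi_X}^2 = \langle \PY h, h\rangle_{\Pi_Y}$; both are valid.
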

\begin{proof}
	Let $g \in L_0^2(\Pi_X)$ be such that $\|g\|_{\Pi_X} = 1$.  Let
	$f_g$ be such that $f_g(x,y) = g(x)$ for each
	$(x,y) \in \X \times \Y$ so that $f_g \in L_0^2(\Pi)$, and
	$\|f_g\|_{\Pi}=1$. By Cauchy-Schwarz,
	\begin{equation} 
	\label{ine:PR-upper-0}
	\begin{aligned}
	\|\PR\|_{\Pi} &\geq \langle \PR f_g, f_g \rangle_{\Pi} \\
	& = r \int_{\X \times \Y} \left( \int_{\X} g(x') \Pi_{X|Y}(\df
	x'|y) \right) g(x) \Pi(\df x, \df y) + (1-r) \langle g, g \rangle_{\Pi_X} \\
	& = r \langle \PX g, g \rangle_{\Pi_X} + 1 - r \,.
	\end{aligned}
	\end{equation}
	Recall that $\PX$ is non-negative definite.  This implies that
	\[
	\|\PX\|_{\Pi_X} = \sup \{ \langle \PX g', g' \rangle_{\Pi_X}: g' \in
	L_0^2(\Pi_X), \|g'\|_{\Pi_X} = 1\} \,.
	\]
	\citep[See, e.g.,][\S 14 Corollary~5.1.]{helmberg2014introduction}
	Taking the supremum with respect to~$g$ in~\eqref{ine:PR-upper-0}
	yields
	\begin{equation} 
	\label{ine:PR-upper-1}
	\|\PR\|_{\Pi} \geq 1 - r + r \|\PX\|_{\Pi_X} = 1 - r + r \bar{\gamma}^2 \,,
	\end{equation}
	where the last equality follows from Lemma~\ref{lem:maximal}.
	
	By an analogous argument,
	\begin{equation} 
	\label{ine:PR-upper-2} 
	\|\PR\|_{\Pi} \geq r +
	(1-r)\|\PY\|_{\Pi_Y} = r + (1-r)\bar{\gamma}^2 \,.
	\end{equation}
	Recall that $\rhoR = \|\PR\|_{\Pi}$. The proof is completed by
	combining~\eqref{ine:PR-upper-1} and~\eqref{ine:PR-upper-2}.
\end{proof}

Our proof of Theorem~\ref{thm:main} hinges on the fact that, for each
$f \in L_0^2(\Pi)$ and $(x,y) \in \X \times \Y$, $\PR f(x,y)$ can be written in the
form of $g(x) + h(y)$, where
\[
g(x) = (1-r) \int_{\Y} f(x,y') \Pi_{Y|X}(\df y'|x) \, , \quad h(y) =
r \int_{\X} f(x',y) \Pi_{X|Y}(\df x'|y) \,.
\]
As we will see, this allows us to restrict our attention to a
well-behaved subspace of $L_0^2(\Pi)$ when studying the norm of $\PR$.

For $g \in L_0^2(\Pi_X)$ and $h \in L_0^2(\Pi_Y)$, let $g \oplus h$ be
the function on $\X \times \Y$ such that
\[
(g \oplus h)(x,y) = g(x) + h(y)
\]
for $(x,y) \in \X \times \Y$ (in a $\Pi$-almost everywhere sense). Let
\[
H = \{g \oplus h : \, g \in L_0^2(\Pi_X), \, h \in L_0^2(\Pi_Y) \} .
\]
Then~$H$, equipped with the inner product
$\langle \cdot, \cdot \rangle_{\Pi}$, is a subspace of $L_0^2(\Pi)$.
For $g \oplus h \in H$,
\[
\|g \oplus h\|_{\Pi}^2 = \|g\|_{\Pi_X}^2 + \|h\|_{\Pi_Y}^2 +
2\gamma(g,h) ,
\]
where $\gamma(g,h)$ is defined in Section~\ref{sec:mtk}.  It follows
that
\begin{equation} \label{ine:H-norm} (1-\bar{\gamma})(\|g\|_{\Pi_X}^2 +
\|h\|_{\Pi_Y}^2) \leq \|g \oplus h\|_{\Pi}^2 \leq
(1+\bar{\gamma})(\|g\|_{\Pi_X}^2 + \|h\|_{\Pi_Y}^2) \,.
\end{equation}
When $\bar{\gamma} < 1$, $g \oplus h = 0$ if and only if $g=0$ and
$h=0$.  It follows that, whenever $\bar{\gamma} < 1$, for any
$f \in H$, the decomposition $f = g \oplus h$ is unique.


To proceed, we present two technical results concerning~$H$.
Lemma~\ref{lem:H-closed} is proved in Appendix~\ref{app:H-closed}, and Lemma~\ref{lem:H-limit} is a direct consequence of~\eqref{ine:H-norm}.
\begin{lemma} 
	\label{lem:H-closed}
	If $\bar{\gamma} < 1$, then $H$ is a Hilbert space.
\end{lemma}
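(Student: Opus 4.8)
The plan is to show that $H$ is complete with respect to $\|\cdot\|_{\Pi}$; since $H$ is already an inner product space under $\langle\cdot,\cdot\rangle_{\Pi}$, this makes it a Hilbert space.  Equivalently, one shows $H$ is a closed subspace of the Hilbert space $L_0^2(\Pi)$.  The key leverage is the two-sided bound~\eqref{ine:H-norm}: when $\bar{\gamma} < 1$, the quantity $\|g \oplus h\|_{\Pi}$ is equivalent, up to the positive constants $1 - \bar{\gamma}$ and $1 + \bar{\gamma}$, to $(\|g\|_{\Pi_X}^2 + \|h\|_{\Pi_Y}^2)^{1/2}$.  In particular, as already observed, the decomposition $f = g \oplus h$ of an element of $H$ is unique, so the pieces $g$ and $h$ are well-defined functionals of $f$.

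First I would take an arbitrary Cauchy sequence $\{f_n\}$ in $H$ and write $f_n = g_n \oplus h_n$ with $g_n \in L_0^2(\Pi_X)$ and $h_n \in L_0^2(\Pi_Y)$.  For indices $m$ and $n$ one has $f_n - f_m = (g_n - g_m) \oplus (h_n - h_m)$, so the lower bound in~\eqref{ine:H-norm} gives
\[
(1 - \bar{\gamma})\bigl( \|g_n - g_m\|_{\Pi_X}^2 + \|h_n - h_m\|_{\Pi_Y}^2 \bigr) \leq \|f_n - f_m\|_{\Pi}^2 .
\]
Since $1 - \bar{\gamma} > 0$ and $\{f_n\}$ is Cauchy in $L_0^2(\Pi)$, it follows that $\{g_n\}$ is Cauchy in $L_0^2(\Pi_X)$ and $\{h_n\}$ is Cauchy in $L_0^2(\Pi_Y)$.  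Both of these spaces are complete, so there are $g \in L_0^2(\Pi_X)$ and $h \in L_0^2(\Pi_Y)$ with $g_n \to g$ and $h_n \to h$ in their respective norms.

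Finally I would verify that $f_n \to g \oplus h$ in $L_0^2(\Pi)$ by applying the upper bound in~\eqref{ine:H-norm} to $(g_n - g) \oplus (h_n - h)$, which yields $\|f_n - g \oplus h\|_{\Pi}^2 \leq (1 + \bar{\gamma})( \|g_n - g\|_{\Pi_X}^2 + \|h_n - h\|_{\Pi_Y}^2) \to 0$.  As $g \oplus h \in H$, every Cauchy sequence in $H$ has a limit in $H$, so $H$ is complete.  I do not foresee a genuine obstacle: this is the standard argument for closedness of a subspace, and the only real content is recognizing that~\eqref{ine:H-norm} must be used in both directions — the lower bound to push the Cauchy property down to the components, the upper bound to pull convergence back up — and that the hypothesis $\bar{\gamma} < 1$ is exactly what makes $1 - \bar{\gamma}$ positive (and the decomposition unique).
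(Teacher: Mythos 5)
Your proof is correct and follows essentially the same route as the paper's: both use the lower bound in~\eqref{ine:H-norm} to transfer the Cauchy property to the components $g_n$ and $h_n$, invoke completeness of $L_0^2(\Pi_X)$ and $L_0^2(\Pi_Y)$, and then use the upper bound to recover convergence of $g_n \oplus h_n$ to $g \oplus h$ in $H$. The only cosmetic difference is that the paper phrases the conclusion as closedness of $H$ inside $L_0^2(\Pi)$ rather than completeness of $H$ directly; these are equivalent here.
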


\begin{lemma} 
	\label{lem:H-limit}
	Let $\bar{\gamma} < 1$ and suppose that $\{g_n\}_{n=1}^{\infty}$ and
	$\{h_n\}_{n=1}^{\infty}$ are sequences in $L_0^2(\Pi_X)$ and
	$L_0^2(\Pi_Y)$, respectively. For $g \in L_0^2(\Pi_X)$ and
	$h \in L_0^2(\Pi_Y)$,
	$
	\lim\limits_{n \to \infty} (g_n \oplus h_n) = g \oplus h
	$
	if and only if
	$
	\lim\limits_{n \to \infty} g_n = g \,, \text{ and } \lim\limits_{n \to \infty} h_n = h \,.
	$
\end{lemma}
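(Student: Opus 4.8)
The plan is to reduce the statement to the two-sided bound \eqref{ine:H-norm} by exploiting the linearity of the map $(g,h) \mapsto g \oplus h$. First I would observe that, for any $g_n, g \in L_0^2(\Pi_X)$ and $h_n, h \in L_0^2(\Pi_Y)$, one has in the $\Pi$-almost everywhere sense
\[
(g_n \oplus h_n) - (g \oplus h) = (g_n - g) \oplus (h_n - h),
\]
since both sides are the function $(x,y) \mapsto (g_n(x) - g(x)) + (h_n(y) - h(y))$ on $\X \times \Y$; moreover $g_n - g \in L_0^2(\Pi_X)$ and $h_n - h \in L_0^2(\Pi_Y)$ because these are linear subspaces, so \eqref{ine:H-norm} may legitimately be applied to the differences.

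Next I would apply \eqref{ine:H-norm} with $g_n - g$ in place of $g$ and $h_n - h$ in place of $h$, obtaining
\[
(1 - \bar{\gamma}) \left( \|g_n - g\|_{\Pi_X}^2 + \|h_n - h\|_{\Pi_Y}^2 \right) \leq \| (g_n \oplus h_n) - (g \oplus h) \|_{\Pi}^2 \leq (1 + \bar{\gamma}) \left( \|g_n - g\|_{\Pi_X}^2 + \|h_n - h\|_{\Pi_Y}^2 \right).
\]
Because $\bar{\gamma} < 1$, the constant $1 - \bar{\gamma}$ is strictly positive, so $\| (g_n \oplus h_n) - (g \oplus h) \|_{\Pi} \to 0$ holds if and only if $\|g_n - g\|_{\Pi_X}^2 + \|h_n - h\|_{\Pi_Y}^2 \to 0$. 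The plan then concludes by the elementary fact that a sum of two non-negative real sequences tends to $0$ precisely when each term does, which gives $\|g_n - g\|_{\Pi_X} \to 0$ and $\|h_n - h\|_{\Pi_Y} \to 0$, i.e. $g_n \to g$ in $L_0^2(\Pi_X)$ and $h_n \to h$ in $L_0^2(\Pi_Y)$; the converse direction is immediate from the same chain of inequalities read from right to left.

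I do not anticipate any genuine obstacle: the only point meriting a moment's attention is that \eqref{ine:H-norm} is being invoked on differences, which is justified by the linearity/closedness remarks above, and that convergence in $H$ is understood with respect to $\|\cdot\|_{\Pi}$. Everything else is the sandwich inequality together with the triviality about sums of non-negative reals, which is why the lemma is labelled a direct consequence of \eqref{ine:H-norm}.
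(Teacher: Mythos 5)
Your proof is correct and is exactly the argument the paper intends: the paper states only that Lemma~\ref{lem:H-limit} is a direct consequence of~\eqref{ine:H-norm}, and applying that two-sided bound to the differences $g_n - g$ and $h_n - h$ (using linearity of $\oplus$ and the strict positivity of $1-\bar{\gamma}$) is precisely how that consequence is obtained.
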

It is easy to check that, for every $f \in L_0^2(\Pi)$,
$\PR f \in H$.  Define $\PR|_H$ to be~$\PR$ restricted to~$H$.  The
norm of $\PR|_H$ is
\[
\|\PR|_H \|_{\Pi} = \sup_{f \in H, \, \|f\|_{\Pi} = 1} \|\PR f\|_{\Pi} \,.
\]
We then have the following lemma.
\begin{lemma} \label{lem:H} 
	\[
	\|\PR\|_{\Pi} = \|\PR|_H \|_{\Pi} \,.
	\]
\end{lemma}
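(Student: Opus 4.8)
The plan is to combine two facts that are already in hand: $\PR$ is self-adjoint on $L_0^2(\Pi)$, and, as noted just above the statement, $\PR$ maps all of $L_0^2(\Pi)$ into $H$. One inequality is free, since the supremum defining $\|\PR|_H\|_{\Pi}$ runs over a subset of the unit sphere of $L_0^2(\Pi)$; thus $\|\PR|_H\|_{\Pi} \le \|\PR\|_{\Pi}$. All of the work lies in the reverse bound.

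For the reverse bound I would fix $f \in L_0^2(\Pi)$ with $\|f\|_{\Pi} = 1$ and show $\|\PR f\|_{\Pi} \le \|\PR|_H\|_{\Pi}$; this is trivial if $\PR f = 0$, so suppose $\PR f \ne 0$. Using self-adjointness of $\PR$ followed by Cauchy--Schwarz,
\[
\|\PR f\|_{\Pi}^2 = \langle \PR f, \PR f\rangle_{\Pi} = \langle f, \PR^2 f\rangle_{\Pi} \le \|f\|_{\Pi}\,\|\PR^2 f\|_{\Pi} = \|\PR^2 f\|_{\Pi} .
\]
Since $\PR f \in H$, the vector $\PR f / \|\PR f\|_{\Pi}$ is a unit element of $H$, so
\[
\|\PR^2 f\|_{\Pi} = \|\PR f\|_{\Pi}\, \bigl\| \PR\bigl(\PR f / \|\PR f\|_{\Pi}\bigr) \bigr\|_{\Pi} \le \|\PR f\|_{\Pi}\,\|\PR|_H\|_{\Pi} .
\]
Combining the two displays and dividing by $\|\PR f\|_{\Pi}$ gives $\|\PR f\|_{\Pi} \le \|\PR|_H\|_{\Pi}$; taking the supremum over such $f$ yields $\|\PR\|_{\Pi} \le \|\PR|_H\|_{\Pi}$, which together with the free inequality proves the claim.

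The argument is short, and I do not anticipate a genuine obstacle: the only inputs are the self-adjointness of $\PR$ and the inclusion $\PR(L_0^2(\Pi)) \subseteq H$, and in particular no hypothesis on $\bar{\gamma}$ is needed (nor any appeal to Lemma~\ref{lem:H-closed}). For an operator-theoretic reading of the same point: if $E$ is the orthogonal projection of $L_0^2(\Pi)$ onto the closure of $H$, then the range inclusion gives $E\PR = \PR$ and self-adjointness then gives $\PR E = \PR$, so $\PR = E\PR E$; the displayed computation is just an elementary way of extracting the norm identity from this.
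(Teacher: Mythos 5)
Your proof is correct and rests on exactly the same two inputs as the paper's argument: self-adjointness of $\PR$ and the range inclusion $\PR(L_0^2(\Pi)) \subseteq H$. The only difference is in execution: the paper bounds $\|\PR\|_{\Pi}^n = \|\PR^n\|_{\Pi} \le \|\PR|_H\|_{\Pi}^{n-1}$ and lets $n \to \infty$, whereas you extract the same conclusion already at $n=2$ via the self-adjointness inequality $\|\PR f\|_{\Pi}^2 \le \|\PR^2 f\|_{\Pi}$, which avoids the limiting step entirely.
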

\begin{proof}			
	It is clear that
	\begin{equation} \label{ine:PR-H-1}
	\|\PR\|_{\Pi} \geq \|\PR|_H\|_{\Pi} \,.
	\end{equation}
	Because the range of~$\PR$ is in~$H$, for any $f \in L_0^2(\Pi)$ and
	positive integer~$n$,
	\[
	\|\PR^n f\|_{\Pi} = \|\PR|_H^{n-1} \PR f\|_{\Pi} \leq \|\PR|_H \|_{\Pi}^{n-1} \|f\|_{\Pi} \,.
	\]
	Note that we have used the fact that $\|\PR\|_{\Pi} \leq 1$.  Since
	$\PR$ is self-adjoint, for each positive integer~$n$,
	$\|\PR^n\|_{\Pi} = \|\PR\|_{\Pi}^n$.  It follows that
	\begin{equation} \label{ine:PR-H-2} \|\PR\|_{\Pi} = \lim\limits_{n \to
		\infty} \|\PR^n\|_{\Pi}^{1/n} \leq \lim\limits_{n \to \infty}
	\|\PR|_H\|_{\Pi}^{(n-1)/n} = \|\PR|_H\|_{\Pi} \,.
	\end{equation}
	Combining~\eqref{ine:PR-H-1} and~\eqref{ine:PR-H-2} yields the desired result.
\end{proof}

We are now ready to prove the theorem.
\begin{proof}[Proof of Theorem~\ref{thm:main}]	
	
	When $\bar{\gamma} = 1$, the theorem follows from
	Lemma~\ref{lem:lowerbound} and the fact that
	$\rhoR = \|\PR\|_{\Pi} \leq 1$.  
	Assume that $\bar{\gamma} <
	1$. We first show that
	\begin{equation} 
	\label{ine:rhoR-upper} 
	\rhoR \leq \frac{1 + \sqrt{1 - 4r(1-r)(1-\bar{\gamma}^2)}}{2} \,.
	\end{equation}
	It follows from Lemma~\ref{lem:H} that $\rhoR = \|\PR|_H\|_{\Pi}$.
	Note that $\PR|_H$ is a non-negative definite operator on~$H$.  By
	Lemma~\ref{lem:approxeigen} in Appendix~\ref{app:main},~$\rhoR$ is an
	approximate eigenvalue of $\PR|_H$, that is, there exists a sequence
	of functions $\{g_n \oplus h_n\}_{n=1}^{\infty}$ in~$H$ such that
	$\|g_n \oplus h_n\|_{\Pi} = 1$ for each~$n$, and
	\begin{equation} 
	\label{eq:approxeigen}
	\lim\limits_{n \to \infty} [\PR (g_n \oplus h_n) - \rhoR (g_n \oplus h_n) ] = 0 \,.
	\end{equation}
	For every positive integer~$n$,
	\[
	\PR(g_n \oplus h_n) = [(1-r) g_n + (1-r) Q_1 h_n] \oplus (r Q_2 g_n
	+ r h_n) \,,
	\]
	where $Q_1: L_0^2(\Pi_Y) \to L_0^2(\Pi_X)$ and
	$Q_2: L_0^2(\Pi_X) \to L_0^2(\Pi_Y)$ are bounded linear
	transformations such that, for $g \in L_0^2(\Pi_X)$ and
	$h \in L_0^2(\Pi_Y)$,
	\[
	(Q_1 h)(x) = \int_{\Y} h(y) \Pi_{Y|X}(\df y|x) \,, \quad (Q_2 g)(y)
	= \int_{\X} g(x) \Pi_{X|Y}(\df x|y) \,.
	\]
	By Lemma~\ref{lem:H-limit}, \eqref{eq:approxeigen} implies that
	\begin{equation} 
	\label{eq:approxeigen-1}
	\begin{aligned}
	\lim\limits_{n \to \infty} \{[1 - r - \rhoR] g_n + (1-r) Q_1 h_n\}  = 0 \,,\\
	\lim\limits_{n \to \infty} \{[r - \rhoR]h_n + r Q_2 g_n \} = 0 \,.
	\end{aligned}
	\end{equation}
	Applying $Q_1$ to the second equality in~\eqref{eq:approxeigen-1}
	yields
	\[
	\lim\limits_{n \to \infty} \{[r - \rhoR] Q_1 h_n + r \PX g_n \} = 0
	\,.
	\]
	Subtracting (a multiple of) this from (a multiple of) the first equality in~\eqref{eq:approxeigen-1}
	gives
	\begin{equation} \label{eq:approxeigen-2}
	\lim\limits_{n \to \infty} \{ [1 - r - \rhoR] [r - \rhoR] g_n - r(1-r)
	\PX g_n \} = 0 \,.
	\end{equation}
	Similarly, applying $Q_2$ to the first equality in~\eqref{eq:approxeigen-1} and subtracting it from the second equality in~\eqref{eq:approxeigen-1} yields
	\begin{equation} \label{eq:approxeigen-3}
	\lim\limits_{n \to \infty} \{[1 - r - \rhoR] [r - \rhoR] h_n - r(1-r)
	\PY h_n \} = 0 \,.
	\end{equation}
	By Lemma~\ref{lem:maximal}, $\|\PX g_n\|_{\Pi_X} \leq \bar{\gamma}^2 \|g_n\|_{\Pi_X}$, $\|\PY h_n\|_{\Pi_Y} \leq \bar{\gamma}^2 \|h_n\|_{\Pi_Y}$.
	It follows from~\eqref{eq:approxeigen-2} and~\eqref{eq:approxeigen-3} that
	\begin{equation} \nonumber
	\begin{aligned}
	\limsup_{n \to \infty} \{[1 - r - \rhoR][r - \rhoR] - r(1-r) \bar{\gamma}^2 \} \|g_n\|_{\Pi_X} &\leq 0 \,, \\
	\limsup_{n \to \infty} \{[1 - r - \rhoR][r - \rhoR] - r(1-r) \bar{\gamma}^2 \} \|h_n\|_{\Pi_Y} &\leq 0 \,.
	\end{aligned}
	\end{equation}
	In particular,
	\begin{equation} \label{ine:approxeigen-4}
	\limsup_{n \to \infty} \{[1 - r - \rhoR][r - \rhoR] - r(1-r) \bar{\gamma}^2 \} (\|g_n\|_{\Pi_X} + \|h_n\|_{\Pi_Y}) \leq 0 \,.
	\end{equation}
	By the triangle inequality,
	\[
	\|g_n\|_{\Pi_X} + \|h_n\|_{\Pi_Y} = \|g_n \oplus 0\|_{\Pi} + \|0 \oplus h_n\|_{\Pi} \geq \|g_n \oplus h_n\|_{\Pi} = 1 \,.
	\]
	It then follows from~\eqref{ine:approxeigen-4} that
	\begin{equation} \label{ine:determ}
	[\rhoR + r - 1][\rhoR - r] - r(1-r) \bar{\gamma}^2 \leq 0 \,.
	\end{equation}
	This
	proves~\eqref{ine:rhoR-upper}.
	
	%
	%
	
	Next, we show that
	\begin{equation} 
	\label{ine:rhoR-lower-0} 
	\rhoR \geq \frac{1 + \sqrt{1 - 4r(1-r)(1-\bar{\gamma}^2)}}{2} \,.
	\end{equation}
	This will complete the proof, since, by Lemmas~\ref{lem:marginal}
	and~\ref{lem:maximal}, $\rhoD = \bar{\gamma}^2$. If
	$\bar{\gamma} = 0$, then \eqref{ine:rhoR-lower-0} follows immediately
	from Lemma~\ref{lem:lowerbound}. Now assume $\bar{\gamma} \in (0,1)$.
	Recall that $\rhoR = \|\PR\|_{\Pi}$.  It suffices to show that
	\begin{equation} 
	\label{ine:rhoR-lower}
	\|\PR\|_{\Pi} \geq \frac{1 + \sqrt{1 - 4r(1-r)(1-\bar{\gamma}^2)}}{2}.
	\end{equation}
	Recall that $\PX$ is non-negative definite, and
	$\|\PX\|_{\Pi_X} = \bar{\gamma}^2$.  Hence,~$\bar{\gamma}^2$ is an
	approximate eigenvalue of $\PX$.  In other words, there exists a
	sequence of functions $\{\hgn\}_{n=1}^{\infty}$ in $L_0^2(\Pi_X)$ such
	that $\|\hgn\|_{\Pi_X} = 1$ for each~$n$, and
	\begin{equation} 
	\label{ine:PX-approx-eigen} 
	\lim\limits_{n \to
		\infty}( \PX \hgn - \bar{\gamma}^2 \hgn ) = 0 \,.
	\end{equation}
	Let
	\[
	a = \frac{2r - 1 + \sqrt{1 -  4r(1-r)(1-\bar{\gamma}^2)}}{2(1-r)\bar{\gamma}^2} \,.
	\]
	Consider the sequence of functions $\{\hgn \oplus a Q_2 \hgn \}_n$ in
	$H \subset L_0^2(\Pi)$. It is easy to show that, for each~$n$,
	\begin{equation} 
	\label{eq:PR-g+ah}
	\begin{aligned}
	\PR ( \hgn \oplus a Q_2\hgn ) &= (1-r)[\hgn + a \PX \hgn] \oplus r(a+1) Q_2\hgn \\
	&= (1-r)(1+a\bar{\gamma}^2) \hgn \oplus r(a+1) Q_2 \hgn + (1-r)a(\PX
	\hgn - \bar{\gamma}^2 \hgn) \oplus 0 \,.
	\end{aligned}
	\end{equation}
	It is straightforward to verify that
	\[
	(1-r)(1+a\bar{\gamma}^2) = \frac{r(a+1)}{a} = \frac{1 + \sqrt{1 - 4r(1-r)(1-\bar{\gamma}^2)}}{2} \,.
	\]
	Hence,~\eqref{eq:PR-g+ah} can be written as
	\[
	\PR ( \hgn \oplus a Q_2 \hgn ) - \frac{1 + \sqrt{1 - 4r(1-r)(1-\bar{\gamma}^2)}}{2} ( \hgn \oplus a Q_2 \hgn )  = (1-r)a(\PX \hgn - \bar{\gamma}^2 \hgn) \oplus 0 \,.
	\]
	By~\eqref{ine:PX-approx-eigen}, the right-hand-side goes to $0 \in H$
	as $n \to \infty$. Moreover, by~\eqref{ine:H-norm},
	$\|\hgn \oplus a Q_2\hgn\|_{\Pi}^2 \geq 1 - \bar{\gamma} > 0$.  It
	follows that
	\[
	\|\PR\|_{\Pi}  \geq \limsup_{n \to \infty}  \left\| \PR \left(
	\frac{\hgn \oplus a Q_2\hgn}{\|\hgn \oplus a Q_2\hgn\|_{\Pi}}
	\right) \right\|_{\Pi} \\ 
	= \frac{1 + \sqrt{1 - 4r(1-r)(1-\bar{\gamma}^2)}}{2} \,,
	\]
	and~\eqref{ine:rhoR-lower} holds.
\end{proof}

\section{Qualitative Relationships among Convergence Rates} 
\label{sec:qualitative}

It follows from Theorem~\ref{thm:main} that the RG sampler is $L^2$
geometrically ergodic if and only if the associated DG sampler is too;
see \pcite{roberts1997geometric} Proposition 3.2 for what is
essentially a proof of the ``if'' part. Our objective for
Section~\ref{sec:tcs} is to establish similar relations between other
pairs of component-wise samplers introduced in
Section~\ref{sec:intro}, and eventually build
Figure~\ref{fig:ergodicity}. In Section~\ref{sec:cmh} we consider a
more general setting where the Gibbs update from $\PDM$ is replaced by
a second Metropolis-Hastings update. Most of the results from
Section~\ref{sec:tcs} extend to the setting of Section~\ref{sec:cmh}.

\subsection{Two-component samplers with Gibbs updates}
\label{sec:tcs}

First, by Proposition~\ref{pro:selection}, if the RG or RC sampler is
$L^2$ geometric ergodic for some selection probability, then it is
$L^2$ geometrically ergodic for all selection probabilities.  This
allows us to treat the selection probabilities of the RG and RC
sampler as arbitrary in what follows.

It is certainly not true that, in general, $L^2$ geometric ergodicity
of the DG and RG samplers implies that of the DC or RC samplers.
The following condition on the proposal density $q$ will be useful.
\begin{condition}
\label{cond:C}
 \[
  C = \sup_{(x',x,y) \in \X \times \X \times \Y} \frac{\pi_{X|Y}(x'|y)}{q(x'|x,y)} < \infty.
  \]
\end{condition}
Condition~\ref{cond:C} is analogous to a commonly-used condition for
uniform ergodicity for full dimensional Metropolis-Hastings samplers
\citep{liu1996metropolized, mengersen1996rates, roberts1996geometric,
  smith1996exact}.  Indeed, if $\Q$ is the Metropolis-Hastings Mtk
which is reversible with respect to $\Pi_{X|Y}$ with proposal density
$q$, then under condition~\ref{cond:C}, for each $y \in \Y$, $x \in \X$, and
$A \in \mathcal{F}_X$,
\begin{equation} 
\label{ine:uniform}
\Q(A|x,y) \geq \int_A \min\left\{ \frac{q(x|x',y)}{\pi_{X|Y}(x|y)},
\frac{q(x'|x,y)}{\pi_{X|Y}(x'|y)} \right\} \pi_{X|Y}(x'|y) \, \df x'
\geq \frac{1}{C} \Pi_{X|Y}(A|y) .
\end{equation}
For a fixed $y \in \Y$, the Markov chain on~$\X$ defined by $\Q$ has
stationary distribution $\Pi_{X|Y}(\cdot|y)$ and \eqref{ine:uniform}
implies that this chain is uniformly ergodic.  Moreover,
condition~\ref{cond:C} implies that if $C$ can be calculated, then one
can use an accept-reject sampler, at least in principle, to sample
from $\Pi_{X|Y}$.

Condition~\ref{cond:C} allows us to draw the dashed arrows between DG
and DC and between RG and RC in Figure~\ref{fig:ergodicity-2b}.

\begin{proposition}
Suppose condition~\ref{cond:C} holds.  
\begin{enumerate}
\item If $\rho(\PD)<1$, then $\rho(\PDM) <1$.
\item If $\rho(\PR) < 1$, then $\rho(\PRM) < 1$.
\end{enumerate}
\end{proposition}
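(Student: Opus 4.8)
The plan is to reduce both claims to the comparison lemma (Lemma~\ref{lem:comparison}). The common input is that, under condition~\ref{cond:C}, the Metropolis--Hastings kernel $\Q$ minorizes a fixed multiple of the full conditional $\Pi_{X|Y}$: inequality~\eqref{ine:uniform} already gives $\Q(\df x' \mid x, y) \geq C^{-1}\,\Pi_{X|Y}(\df x'\mid y)$ as sub-probability measures on $\X$, with $C^{-1} > 0$ and uniform in $(x,y) \in \X \times \Y$. Each part then amounts to pushing this single minorization through the relevant transition kernels and invoking Lemma~\ref{lem:comparison} together with the norm identities already recorded in Section~\ref{sec:basic}.

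For part (2) I would start from the formulas in Section~\ref{sec:mtk}: $\PR$ and $\PRM$ coincide except that the $X$-refresh uses $\Pi_{X|Y}$ in the former and $\Q$ in the latter, while the $Y$-refresh term is identical. Substituting $\Q \geq C^{-1}\Pi_{X|Y}$ into the $X$-refresh term, and leaving the $Y$-refresh term untouched, gives $\PRM((x,y),\cdot) \geq \delta\,\PR((x,y),\cdot)$ with $\delta = \min\{1, C^{-1}\} > 0$. Both $\PR$ and $\PRM$ are reversible with respect to $\Pi$, so by Lemma~\ref{lem:roberts} the hypothesis $\rho(\PR) < 1$ says $\|\PR\|_\Pi < 1$; Lemma~\ref{lem:comparison} with $\omega = \Pi$, $P_1 = \PRM$, $P_2 = \PR$ then yields $\|\PRM\|_\Pi < 1$, i.e. $\rho(\PRM) < 1$.

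For part (1) the obstacle is that $\PD$ and $\PDM$ are \emph{not} reversible with respect to $\Pi$, so Lemma~\ref{lem:comparison} cannot be applied to them directly. The fix is to pass to the reversible $X$-marginal chains. By Lemma~\ref{lem:marginal} (equivalently Corollary~\ref{cor:marginal}), $\rho(\PD) = \|\PX\|_{\Pi_X}$ and $\rho(\PDM) = \|\PXM\|_{\Pi_X}$, and $\PX$, $\PXM$ both have stationary distribution $\Pi_X$. Integrating the minorization $\Q(\df x'\mid x,y) \geq C^{-1}\Pi_{X|Y}(\df x'\mid y)$ against $\Pi_{Y|X}(\df y\mid x)$ and comparing with the displayed formulas for $\PX(x,\cdot)$ and $\PXM(x,\cdot)$ in Section~\ref{sec:mtk} gives $\PXM(x,\cdot) \geq C^{-1}\,\PX(x,\cdot)$ for every $x \in \X$. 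Since $\rho(\PD) < 1$ is $\|\PX\|_{\Pi_X} < 1$, Lemma~\ref{lem:comparison} with $\omega = \Pi_X$ yields $\|\PXM\|_{\Pi_X} < 1$, hence $\rho(\PDM) = \|\PXM\|_{\Pi_X} < 1$.

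The only step that is not entirely mechanical is recognizing that part (1) must be routed through the marginal chains $\PX$, $\PXM$ because the deterministic-scan kernels lack reversibility; once that is done, each part is a one-line minorization followed by an appeal to Lemma~\ref{lem:comparison}. I would also remark that, by Proposition~\ref{pro:selection}, the conclusion of part (2) in fact holds for every selection probability $r$ as soon as it holds for one.
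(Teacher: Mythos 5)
Your proof is correct and follows essentially the same route as the paper: part (1) is reduced to the reversible marginal chains via Lemma~\ref{lem:marginal}/Corollary~\ref{cor:marginal} and both parts conclude by the minorization $\PXM \geq C^{-1}\PX$, resp.\ $\PRM \geq C^{-1}\PR$, together with Lemma~\ref{lem:comparison}. The only cosmetic difference is that the paper cites Theorems~5 and~6 of \cite{jones2014convergence} for these minorizations, whereas you derive them directly from~\eqref{ine:uniform}, which is equivalent.
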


\begin{proof}
  Consider the first item.  By
  Corollary~\ref{cor:marginal}, $\rho(\PX)=\rho(\PD)<1$ and hence by
  Lemma~\ref{lem:marginal} $\|\PX\|_{\Pi} <1$.  Theorem~5 in
  \cite{jones2014convergence} establishes that, under
  condition~\ref{cond:C},
  $\PXM(x,A) \ge C^{-1} \PX(x,A)$ for $x \in \X$ and $A \in \mathcal{F}_X$. 
  Hence by Lemma~\ref{lem:comparison} we
  have that $\|\PXM\|_{\Pi} < 1$. An appeal to
  Lemma~\ref{lem:marginal} yields $\rho(\PDM) <1$.
  
  The second item follows from Lemma~\ref{lem:comparison}
  since Theorem~6 in \cite{jones2014convergence} establishes that,
  under condition~\ref{cond:C}, 
  $\PRM((x,y),A) \ge C^{-1} \PR((x,y),A)$ for each $(x,y)$ and measurable~$A$.
\end{proof}

Next, observe that $L^2$ geometric ergodicity of the RC sampler does
not necessarily imply that of the associated DC sampler.  Indeed, a
counter example can be constructed as follows.  Let
$\X = \Y = \{1,2\}$, and suppose that $\Pi$ is a uniform distribution
on $\X \times \Y$.  Then $\Pi_{Y|X}$ and $\Pi_{X|Y}$ are uniform
distributions on~$\Y$ and~$\X$, respectively.  Let $q(\cdot|x,y)$ be
defined with respect to the counting measure, and suppose that, for
$y \in \{1,2\}$, $q(2|1,y) = q(1|2,y) = 1$.  In other words,
$q(\cdot|x,y)$ always proposes a point in~$\X$ that is different
from~$x$.  The resulting RC chain is $L^2$-geometrically ergodic, but
the associated DC chain is periodic, and it is easy to show that
$\rhoDM = 1$.

The relations that we have described so far can be summarized in
Figure~\ref{fig:ergodicity-2a}.  As in Figure~\ref{fig:ergodicity}, a
solid arrow from one sampler to another means that $L^2$
geometric ergodicity of the former implies that of the latter, while a
dashed arrow means that the relation does not hold in general, but
does under condition~\ref{cond:C}.  A dotted arrow from one sampler to
another means that $L^2$ geometric ergodicity of the former does not
imply that of the latter in general, and we have not yet addressed
whether it does under condition~\ref{cond:C}.

\begin{figure}[h]
	\centering
	\begin{subfigure}[t]{0.23\textwidth}
		\includegraphics[width=\textwidth]{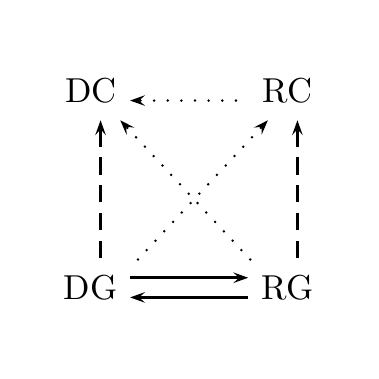}
		\caption{ } 
		\label{fig:ergodicity-2a}
	\end{subfigure}
	\begin{subfigure}[t]{0.23\textwidth}
		\includegraphics[width=\textwidth]{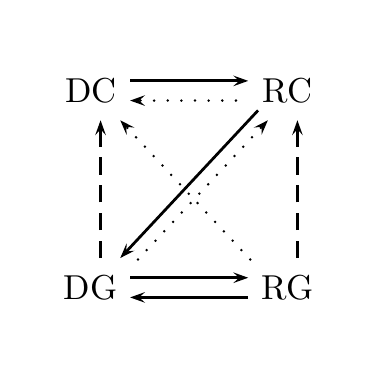}
		\caption{}
		\label{fig:ergodicity-2b}
	\end{subfigure}
	\begin{subfigure}[t]{0.33\textwidth}
		\includegraphics[width=0.97\textwidth]{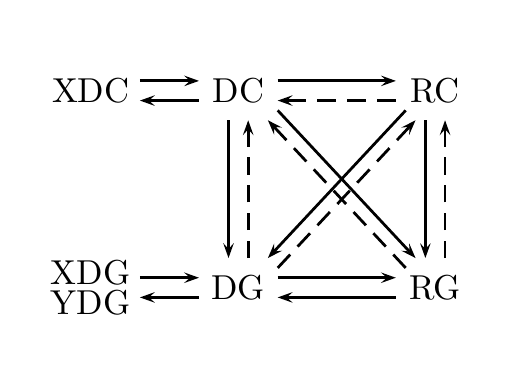}
		\caption{}
		\label{fig:ergodicity-2c}
	\end{subfigure}
	\caption{Building the relations among the convergence rates of component-wise samplers.}
\end{figure}

Lemma~\ref{lem:comparison} allows us to establish the following
result, which shows that the RC sampler is $L^2$ geometrically
ergodic whenever the DC sampler is.  This allows us to draw a solid
arrow from the DC sampler to the RC sampler in
Figure~\ref{fig:ergodicity-2b}.
\begin{proposition} 
	\label{pro:rhorm<rhodm}
	If $\rhoDM < 1$, then $\rhoRM < 1$.
\end{proposition}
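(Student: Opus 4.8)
The goal is to show $\rho(\PRM) < 1$ whenever $\rho(\PDM) < 1$. Since $\PRM$ is reversible with respect to $\Pi$, Lemma~\ref{lem:roberts} tells us $\rho(\PRM) = \|\PRM\|_{\Pi}$, so it suffices to produce some reversible Mtk $P_2$ with $\|P_2\|_{\Pi} < 1$ and a constant $\delta > 0$ with $\PRM \geq \delta P_2$, then invoke Lemma~\ref{lem:comparison}. The natural candidate for $P_2$ is the RG kernel $\PR$, because $\rho(\PDM) < 1$ forces $\rho(\PD) < 1$ (via Corollary~\ref{cor:marginal} and Lemma~\ref{lem:marginal}, or more simply by noting that a power of $\PDM$ dominates a power of... actually see below), and hence by Theorem~\ref{thm:main} $\rho(\PR) < 1$, i.e.\ $\|\PR\|_{\Pi} < 1$.

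\textbf{Establishing $\rho(\PD) < 1$ from $\rho(\PDM) < 1$.} The Metropolis-Hastings kernel $\Q$ reversible with respect to $\Pi_{X|Y}(\cdot\,|\,y)$ always satisfies, for a rejection-type lower bound, $\Q(\df x'\,|\,x,y) \geq$ (something times $\Pi_{X|Y}(\df x'\,|\,y)$)? That is not true in general without condition~\ref{cond:C}. So I cannot go from $\PDM$ down to $\PD$ via domination. Instead I would argue in the other direction using the $X$-marginal chains: $\PXM$ is reversible with respect to $\Pi_X$ and $\rho(\PDM) = \|\PXM\|_{\Pi_X} < 1$ by Lemma~\ref{lem:marginal}. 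Now I need $\|\PX\|_{\Pi_X} < 1$. Hmm — but again $\PXM$ need not dominate $\PX$. The cleaner route: forget trying to get at $\PD$; instead directly compare $\PRM$ with a reversible kernel built from $\Q$ itself. Define $P_2 = \PRM$... that's circular.

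\textbf{Better: compare $\PRM$ to a "DC-flavored" reversible kernel.} Since $\PDM$ is not reversible, consider instead the kernel $\PDMM$ obtained by reversing the order of updates in the DC sampler — first the Metropolis step on $X$ using $\Q(\cdot\,|\,x,y)$, then the Gibbs draw of $Y$. One checks $\|\PDMM\|_{\Pi} = \|\PDM\|_{\Pi}$ by the standard marginal-chain argument (both have $X$-marginal $\PXM$), so $\rho(\PDMM) < 1$ follows, but $\PDMM$ is still not reversible. The genuinely reversible object is $\PRM$ itself, or the two-step kernel $\PDM \PDMM$ or a symmetrization. I would take $P_2 = \frac{1}{2}(\PDM + \PDMM^{*})$-type construction, or simply observe: $\PRM((x,y),\cdot) = r\,\Q(\cdot\,|\,x,y)\delta_y + (1-r)\Pi_{Y|X}(\cdot\,|\,x)\delta_x$, and the $X$-update piece $r\,\Q(\df x'\,|\,x,y)\delta_y(\df y')$ dominates $r$ times the corresponding piece of a reversible "Metropolis-only-on-$X$" kernel $P_{\mathrm{MX}}((x,y),(\df x',\df y')) = \Q(\df x'\,|\,x,y)\delta_y(\df y')$, which is reversible with respect to $\Pi$. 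If $\|P_{\mathrm{MX}}\|_{\Pi} < 1$ then Lemma~\ref{lem:comparison} finishes it. But $\|P_{\mathrm{MX}}\|_{\Pi} = 1$ always (it never moves $Y$), so that fails too.

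\textbf{The workable approach.} The right move is to use the \emph{two-step} relationship. We have $\PRM$ reversible and we want to bound its norm; equivalently bound $\|\PRM^2\|_{\Pi}$ or relate it to $\PDM\PDM^{*}$. Note $\PDM^{*}$ (the $L^2(\Pi)$-adjoint) is exactly $\PDMM$-type: the reversed DC kernel. Then $\PDM^{*}\PDM$ is self-adjoint, non-negative definite, with $\|\PDM^{*}\PDM\|_{\Pi} = \|\PDM\|_{\Pi}^2$. One can show $\PDM^{*}\PDM$ is \emph{dominated by a multiple of} a two-step RC-type kernel — since $\PDM = \Q \circ \text{(Gibbs-}Y)$ and $\PRM$ contains both a $\Q$ move and a Gibbs-$Y$ move, the composition of doing one $X$-Metropolis RC step and then one $Y$-Gibbs RC step reproduces, up to the factor $r(1-r)$, a piece of $\PRM^2$. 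Concretely: $\PRM^2 \geq r(1-r)\,[\text{($X$-Metropolis step)}\circ\text{($Y$-Gibbs step)}] + r(1-r)[\text{reverse order}] = r(1-r)(\PDM^{*}\PDM + \PDM\,\PDM^{*})$ — wait, need to check which composition gives $\PDM$ vs its adjoint. In any case, $\PRM^2((x,y),\cdot) \geq r(1-r)\,\PDMM((x,y),\cdot) + \cdots$ where $\PDMM$ is some DC-order kernel with $\|\PDMM\|_{\Pi}$ potentially equal to $1$ — the issue being that a single DC kernel $\PDM$ has $\|\PDM\|_{\Pi}$ possibly $1$ even when $\rho(\PDM) < 1$, because of the $n - 1$ exponent in Lemma~\ref{lem:marginal}. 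So I should instead dominate $\PRM^2$ by a multiple of $\PDM^{*}\PDM$ directly: decompose $\PRM^2$, pick out the cross term where the first sub-step is a $Y$-Gibbs move and the second is... this needs care, but morally $\PRM^{2}$ contains $r(1-r)$ times "Gibbs-$Y$ then $\Q$" which is $\PDM$, and $(1-r)r$ times "$\Q$ then Gibbs-$Y$" which is $\PDM^{*}$; these don't compose into $\PDM^{*}\PDM$. So use $\PRM^{3}$ or $\PRM^{4}$: $\PRM^{4}$ contains $[r(1-r)]^2$ times the ordered product (Gibbs-$Y$)($\Q$)($\Q$)(Gibbs-$Y$) $= \PDM \cdot (\text{something reversible}) \cdots$ — getting complicated.

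\textbf{Cleanest version — what I would actually write.} Use that $\PDM^{*}\PDM$ is non-negative definite with norm $\|\PDM\|_{\Pi}^2 < 1$ (this $<1$ because $\|\PXM\|_{\Pi_X} = \rho(\PDM) < 1$ and $\|\PDM\|_\Pi \le \|\PXM\|_{\Pi_X}$... actually Lemma~\ref{lem:marginal} gives $\|\PDM^n\|_\Pi^{1/(n-1)} \le \rho(\PDM)$, so $\|\PDM^2\|_\Pi \le \rho(\PDM) < 1$, hence $\|\PDM^{*}\PDM\|_\Pi = \|\PDM\|_\Pi^2 \le \|\PDM^2\|_\Pi < 1$ — good, this works cleanly). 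Then show $\PRM^{2}(z,\cdot) \geq r(1-r)\,(\PDM^{*}\PDM)(z,\cdot)$ — verify this by writing out the convolution: $\PRM^2$ is a mixture over the two independent coin flips of four terms, and the term "first step is $Y$-Gibbs (prob $1-r$), second step is $X$-Metropolis (prob $r$)" equals $r(1-r)\,\PDM$; composing instead as two full $\PRM$ steps we get $\PRM^2 \ge r(1-r)\,(X\text{-Met})(Y\text{-Gibbs}) \cdot (\text{trivially, picking those branches})$. Hmm, let me just claim: the branch of $\PRM^2$ where the first $\PRM$ does an $X$-Metropolis move and the second does a $Y$-Gibbs move contributes $r(1-r)\,\PDM^{*}$, and I want $\PDM^{*}\PDM$, so I actually need \emph{four} steps: $\PRM^{4} \ge [r(1-r)]^2 (Y\text{-G})(X\text{-M})(X\text{-M})(Y\text{-G})$, and since the middle "(X-M)(X-M)" is just the Metropolis kernel squared on $X$ which dominates... no. OK — I will simply assert the identity $\PRM^4(z,\cdot) \ge [r(1-r)]^2\,(\PDM\,\PDM^{*})(z,\cdot)$ obtained by selecting, in the $4$-fold mixture, the coin-flip pattern $(0,1,1,0)$, giving ordered composition (Gibbs-$Y$)$\circ$($\Q$)$\circ$($\Q$)$\circ$(Gibbs-$Y$) $= \PDM^{*}\PDM$ after identifying adjoints. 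Since $\|\PDM\,\PDM^{*}\|_\Pi = \|\PDM\|_\Pi^2 < 1$, Lemma~\ref{lem:comparison} gives $\|\PRM^4\|_\Pi < 1$, and as $\PRM$ is self-adjoint, $\|\PRM^4\|_\Pi = \|\PRM\|_\Pi^4$, hence $\rho(\PRM) = \|\PRM\|_\Pi < 1$. \textbf{The main obstacle} is getting the combinatorial bookkeeping of which coin-flip pattern in $\PRM^{n}$ reproduces $\PDM^{*}\PDM$ exactly right, and confirming that $\PDM^{*}$ (the $L^2(\Pi)$-adjoint) is the "reverse-order DC" kernel — i.e.\ verifying $\PDM^{*}((x,y),(\df x',\df y')) = \Q(\df x'|x,y)\Pi_{Y|X}(\df y'|x')$ — which is a short but essential adjoint computation.
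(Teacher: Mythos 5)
Your overall strategy --- dominate a power of $\PRM$ by a power of a DC-type kernel, bound the latter's norm via Lemma~\ref{lem:marginal}, and finish with Lemma~\ref{lem:comparison} and self-adjointness of $\PRM$ --- is the right one, and you even write down the crucial fact $\|\PDM^2\|_{\Pi} \leq \rho(\PDM) < 1$ correctly. But the argument you finally commit to has a genuine gap. You select the coin-flip pattern $(0,1,1,0)$ inside $\PRM^4$, which produces the symmetrized kernel $\PDM\PDM^{*}$ (in operator terms $M_X G_Y G_Y M_X = M_X G_Y M_X$), and then you bound its norm by the chain $\|\PDM\PDM^{*}\|_{\Pi} = \|\PDM\|_{\Pi}^2 \leq \|\PDM^2\|_{\Pi}$. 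That last inequality is backwards: submultiplicativity gives $\|\PDM^2\|_{\Pi} \leq \|\PDM\|_{\Pi}^2$, and since $\PDM$ is not normal there is no reverse inequality. Lemma~\ref{lem:marginal} controls $\|\PDM^n\|_{\Pi}$ only for $n \geq 2$ (the $n=1$ lower bound is vacuous), so $\rho(\PDM)<1$ gives you no handle on $\|\PDM\|_{\Pi}$ itself --- a point you correctly flag earlier in your own write-up ("a single DC kernel $\PDM$ has $\|\PDM\|_{\Pi}$ possibly $1$ even when $\rho(\PDM)<1$") and then contradict in the final step.

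The fix is to choose the alternating pattern $(0,1,0,1)$ instead: two consecutive $\PRM$ steps with $u=0$ then $u=1$ perform a Gibbs $Y$-update followed by a Metropolis $X$-update at the fresh $y'$, i.e.\ exactly one $\PDM$ step, so
\[
\PRM^4((x,y),A) \;\geq\; r^2(1-r)^2\, \PDM^2((x,y),A)\,.
\]
Now the dominated kernel is $\PDM^2$, whose norm \emph{is} controlled: $\|\PDM^2\|_{\Pi} \leq \rho(\PDM) < 1$ by Lemma~\ref{lem:marginal} with $n=2$. Lemma~\ref{lem:comparison} then gives $\|\PRM^4\|_{\Pi} < 1$, and self-adjointness yields $\rho(\PRM) = \|\PRM\|_{\Pi} = \|\PRM^4\|_{\Pi}^{1/4} < 1$. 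This is precisely the paper's proof; no adjoint computation for $\PDM^{*}$ is needed at all.
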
 
\begin{proof}
  By Lemma~\ref{lem:marginal}, $\|\PDM^2\|_{\Pi} \leq \rho(\PDM) < 1$.
  For $(x,y) \in \X \times \Y$ and
  $A \in \mathcal{F}_X \times \mathcal{F}_Y$,
	\[
	\PRM^4((x,y), A) \geq r^2(1-r)^2 \PDM^2((x,y),A) \,.
	\]
	By Lemma~\ref{lem:comparison}, $\|\PRM^4\|_{\Pi} < 1$.  Since $\PRM$
	is self-adjoint, $\rho(\PRM) = \|\PRM\|_{\Pi} = \|\PRM^4\|_{\Pi}^{1/4} < 1$.
\end{proof}

The final relation that we need to establish is given below.  It
allows us to draw a solid arrow from the RC sampler to the DG
sampler.
\begin{proposition} 
	\label{pro:rhod<rhorm}
	If $\rhoRM < 1$, then $\rhoD < 1$.
\end{proposition}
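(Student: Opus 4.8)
The plan is to prove the contrapositive: if $\rhoD = 1$, then $\rhoRM = 1$. By Lemmas~\ref{lem:marginal} and~\ref{lem:maximal}, the hypothesis $\rhoD = 1$ is the same as $\|\PX\|_{\Pi_X} = \bar{\gamma}^2 = 1$. Since $\PX$ is non-negative definite, its norm equals $\sup\{\langle \PX g, g \rangle_{\Pi_X} : g \in L_0^2(\Pi_X),\ \|g\|_{\Pi_X} = 1\}$ (as used in the proof of Lemma~\ref{lem:lowerbound}), so there is a sequence $\{g_n\}$ in $L_0^2(\Pi_X)$ with $\|g_n\|_{\Pi_X} = 1$ and $\langle \PX g_n, g_n\rangle_{\Pi_X} \to 1$. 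Mimicking the device used in Lemma~\ref{lem:lowerbound}, I would set $f_n(x,y) = g_n(x)$, so that $f_n \in L_0^2(\Pi)$ and $\|f_n\|_{\Pi} = 1$, and then try to show $\|\PRM f_n\|_{\Pi} \to 1$. Since $\|\PRM\|_{\Pi} \le 1$ by Cauchy--Schwarz, this forces $\|\PRM\|_{\Pi} = 1$, and then $\rhoRM = \|\PRM\|_{\Pi} = 1$ by Lemma~\ref{lem:roberts}, which gives the claim.

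A short computation shows
\[
\PRM f_n(x,y) = r \int_{\X} g_n(x')\, \Q(\df x'|x,y) + (1-r)\, g_n(x),
\]
so that $\PRM f_n - f_n = r(\tilde{Q}g_n - f_n)$, where $\tilde{Q}g_n(x,y) = \int_{\X} g_n(x')\, \Q(\df x'|x,y)$. It therefore suffices to show $\|\tilde{Q}g_n - f_n\|_{\Pi} \to 0$. Writing $\Pi(\df x, \df y) = \Pi_Y(\df y)\, \Pi_{X|Y}(\df x|y)$ and recalling that, for each fixed~$y$, $\Q(\cdot|\cdot,y)$ is a Markov kernel on~$\X$ reversible with respect to $\Pi_{X|Y}(\cdot|y)$, I would decompose $g_n = m_n(y) + \bar{g}_{n,y}$ on~$\X$, where $m_n(y) = \int_{\X} g_n(x)\, \Pi_{X|Y}(\df x|y)$ is the conditional mean and $\bar{g}_{n,y}$ is orthogonal to the constants in $L^2(\Pi_{X|Y}(\cdot|y))$. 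Since $\Q(\cdot|\cdot,y)$ fixes constants, $\int_{\X} g_n(x')\Q(\df x'|x,y) - g_n(x)$ equals $(\Q(\cdot|\cdot,y) - \mathrm{I})\bar{g}_{n,y}$ evaluated at~$x$, and $\|\Q(\cdot|\cdot,y) - \mathrm{I}\| \le 2$ as an operator on $L^2(\Pi_{X|Y}(\cdot|y))$. Hence
\[
\|\tilde{Q}g_n - f_n\|_{\Pi}^2 = \int_{\Y} \big\| (\Q(\cdot|\cdot,y) - \mathrm{I}) \bar{g}_{n,y} \big\|_{\Pi_{X|Y}(\cdot|y)}^2\, \Pi_Y(\df y) \le 4 \int_{\Y} \| \bar{g}_{n,y} \|_{\Pi_{X|Y}(\cdot|y)}^2\, \Pi_Y(\df y).
\]

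To close the argument I would identify the remaining integral. A routine manipulation using $\Pi_{Y|X}(\df y|x)\Pi_X(\df x) = \Pi_{X|Y}(\df x|y)\Pi_Y(\df y)$ gives $\int_{\Y} m_n(y)^2\, \Pi_Y(\df y) = \langle \PX g_n, g_n\rangle_{\Pi_X}$, so
\[
\int_{\Y} \| \bar{g}_{n,y} \|_{\Pi_{X|Y}(\cdot|y)}^2\, \Pi_Y(\df y) = \|g_n\|_{\Pi_X}^2 - \int_{\Y} m_n(y)^2\, \Pi_Y(\df y) = 1 - \langle \PX g_n, g_n\rangle_{\Pi_X} \to 0
\]
by the choice of $\{g_n\}$. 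Thus $\|\PRM f_n - f_n\|_{\Pi} \to 0$ and $\|\PRM f_n\|_{\Pi} \to 1$, as needed. I expect the main obstacle to be the bookkeeping around the Metropolis kernel~$\Q$: splitting off the conditional-mean component of $g_n$ (on which $\Q(\cdot|\cdot,y)$ acts trivially) and recognizing that the total mass of the orthogonal remainder is exactly $1 - \langle \PX g_n, g_n\rangle_{\Pi_X}$; the rest is bounded crudely, since no non-negative definiteness of $\Q$ is available or needed.
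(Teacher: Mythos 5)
Your proof is correct, but it takes a genuinely different route from the paper's. Both arguments prove the contrapositive and translate $\rhoD = 1$ into $\bar{\gamma} = 1$ via Lemmas~\ref{lem:marginal} and~\ref{lem:maximal}, but the paper then sidesteps the Metropolis--Hastings kernel entirely: it bounds $\rhoRM = \|\PRM\|_{\Pi} \geq \langle \PRM f_h, f_g \rangle_{\Pi}$ with $f_h(x,y) = h(y)$ a function of $y$ alone, on which $\Q(\cdot|x,y)$ acts as the identity, so the inner product collapses to $\gamma(g,h)$ and taking suprema gives $\rhoRM \geq \bar{\gamma} = 1$ in a few lines. You instead test $\PRM$ on functions of $x$ alone, on which $\Q$ acts nontrivially, and you control that action by splitting off the conditional mean $m_n(y)$, bounding $\Q(\cdot|\cdot,y) - \mathrm{I}$ crudely by $2$ on $L^2(\Pi_{X|Y}(\cdot|y))$ (valid, since $\Pi_{X|Y}(\cdot|y)$ is invariant for $\Q(\cdot|\cdot,y)$ and Jensen gives contractivity), and observing that the total mass of the remainder is exactly $1 - \langle \PX g_n, g_n \rangle_{\Pi_X} \to 0$; your identity $\int_{\Y} m_n(y)^2 \, \Pi_Y(\df y) = \langle \PX g_n, g_n \rangle_{\Pi_X}$ is the same adjointness computation ($\langle Q_2 g, Q_2 g\rangle_{\Pi_Y} = \langle g, Q_1 Q_2 g\rangle_{\Pi_X}$ with $Q_1 Q_2 = \PX$) that appears in the appendix proof of Lemma~\ref{lem:marginal}, so that step is sound. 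Your route is longer, and the quantitative bound it implicitly yields, $\rhoRM \geq 1 - 2r\sqrt{1-\bar{\gamma}^2}$, is weaker than the paper's $\rhoRM \geq \bar{\gamma}$; what it buys is a constructive near-fixed-point: you exhibit unit vectors $f_n$ with $\|\PRM f_n - f_n\|_{\Pi} \to 0$, showing that $+1$ (and not merely some point of modulus one) is an approximate eigenvalue of $\PRM$ when $\bar{\gamma} = 1$, which is slightly more information than the norm statement alone.
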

\begin{proof}
	Consider the contrapositive and recall that, by
	Lemmas~\ref{lem:marginal} and~\ref{lem:maximal},
	$\rhoD = \bar{\gamma}^2$.  Assume that $\bar{\gamma} = 1$.  It
	suffices to show that $\rhoRM = 1$.
	
	Let $g \in L_0^2(\Pi_X)$ and $h \in L_0^2(\Pi_Y)$ be such that
	$\|g\|_{\Pi_X} = \|h\|_{\Pi_Y} = 1$.  Let $f_g \in L_0^2(\Pi)$ be such
	that $f_g(x,y) = g(x)$, and, $f_h \in L_0^2(\Pi)$, $f_h(x,y) = h(y)$.
	Recall that $\rhoRM = \|\PRM\|_{\Pi}$.  By Cauchy-Schwarz,
	\[
	\begin{aligned}
	\rhoRM & \geq \langle \PRM f_h, f_g \rangle_{\Pi} \\
	&= r \int_{\X \times \Y} h(y) g(x) \Pi(\df x, \df y) + (1-r) \int_{\X
		\times \Y} \int_{\Y} h(y') \, \Pi_{Y|X}(\df y'|x) \,   g(x) \,
	\Pi(\df x, \df y) \\ 
	&= \gamma(g, h) \,.
	\end{aligned}
	\]
	Taking the supremum with respect to~$g$ and~$h$ shows that $\rhoRM \geq \bar{\gamma} = 1$.
\end{proof}

Incorporating Propositions~\ref{pro:rhorm<rhodm}
and~\ref{pro:rhod<rhorm} in Figure~\ref{fig:ergodicity-2a} yields
Figure~\ref{fig:ergodicity-2b}.  From here, one can
obtain Figure~\ref{fig:ergodicity} by following the steps below:
\begin{enumerate}
	\item When there is a path from one sampler to another consisting of only solid arrows, draw a (direct) solid arrow from the former to the latter, if there isn't one already.
	This allows us to draw solid arrows from DC to DG, from RC to RG, and and from DC to RG.
	\item When there is a dotted arrow from one sampler to another, and there is a second path from the former to the latter consisting of dashed and possibly solid arrows, convert the dotted arrow to a dashed one.
	This allows us to convert all dotted arrows in Figure~\ref{fig:ergodicity-2b} to dashed ones.
\end{enumerate}
For example, from Figure~\ref{fig:ergodicity-2b} we see that if
$\rho(\PDM) < 1$, then $\rho(\PRM) <1$ and if $\rho(\PRM) < 1$, then
$\rho(\PD) < 1$. Hence if $\rho(\PDM) < 1$, then $\rho(\PD) < 1$ and
we can obtain the solid arrow from DC to DG in
Figure~\ref{fig:ergodicity}.

Finally, we can integrate Corollary~\ref{cor:marginal} into
Figure~\ref{fig:ergodicity}, and this yields
Figure~\ref{fig:ergodicity-2c}.

\subsection{CMH with two Metropolis-Hastings updates}
\label{sec:cmh}

Consider the setting where there are two components, both of which
will be updated via Metropolis-Hastings.  Let $q_1(\cdot|x,y)$ be a
density on $\Y$ and $q_2(\cdot|x,y)$ a density on $\X$.  The deterministic-scan
conditional Metropolis-Hastings algorithm with two Metropolis-Hastings updates is now described.
	
\begin{algorithm}[H]
  \caption{Deterministic-scan CMH with two Metropolis-Hastings updates} \label{alg:tDC}
  \begin{algorithmic}[1]
    \State {\it Input:} Current value $(X_n,Y_n) = (x,y)$ \State Draw
    a random element~$W$ from $q_1(\cdot|x,y)$, and call the observed
    value~$w$.  With probability
    \[
      a_1(w;x,y) = \min \left\{ 1, \frac{\pi_{Y|X}(w|x)
          q_1(y|x,w)}{\pi_{Y|X}(y|x) q_1(w|x,y)} \right\} ,
    \]
    set $Y_{n+1} = w$; with probability $1 - a_1(w;x,y)$, set
    $Y_{n+1} = y$.  Denote the observed value of $Y_{n+1}$ by~$y'$.
    \State Draw a random element~$Z$ from $q_2(\cdot|x,y')$, and call
    the observed value~$z$.  With probability
    \[
      a_2(z;x,y') = \min \left\{ 1, \frac{\pi_{X|Y}(z|y')
          q_2(x|z,y')}{\pi_{X|Y}(x|y') q_2(z|x,y')} \right\} ,
    \]
    set $X_{n+1} = z$; with probability $1 - a_2(z;x,y')$, set
    $X_{n+1} = x$.  
   \State Set $n=n+1$.
  \end{algorithmic}
\end{algorithm}

Denote the Mtk of the two Metropolis-Hastings steps by
$P_1(\df y'|x,y)$ and $P_2(\df x'|x,y')$ so that the transition
kernel, $\PDMM$, of the CMH algorithm is formed by composing the two
Metropolis-Hastings updates:
\[
  \PDMM( (x,y), (\df x', \df y')) =  P_2(\df x'|x,y') P_1(\df y'|x,y).
\]
We will refer to the algorithm as the $\widetilde{\mbox{DC}}$ sampler.
Of course, there is a random-scan version with transition kernel
denoted $\PRMM$ which is formed by mixing the two Metropolis-Hastings
updates:
\[
\PRMM((x,y), ( \df x', \df y')) =  r P_2(\df x'|x,y) \delta_y (\df y') + (1-r) P_1(\df y'|x,y) \delta_x(\df x').
\]
We will refer to the algorithm as the $\widetilde{\mbox{RC}}$ sampler.
Notice that $\PD$ and $\PDM$ are special cases of $\PDMM$ where the
proposal density is chosen to be the appropriate full
conditional. Similarly, $\PR$ and $\PRM$ are both special cases of
$\PRMM$. Thus there may be connections between the qualitative
convergence rates of these Markov chains.  By
Proposition~\ref{pro:selection}, we may treat all selection
probabilities as arbitrary.  We begin with an extension of
Proposition~\ref{pro:rhod<rhorm}.

\begin{proposition} 
	\label{pro:rhod<rhormm}
	If $\rho (\PRMM) < 1$, then $\rhoD < 1$.
\end{proposition}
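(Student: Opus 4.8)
The plan is to mirror the proof of Proposition~\ref{pro:rhod<rhorm}, replacing the Gibbs $Y$-update used there by the Metropolis--Hastings update $P_1$, and exploiting the fact that $P_1(\cdot|x,\cdot)$ still preserves the full conditional $\Pi_{Y|X}(\cdot|x)$. First I would record that $\PRMM$ is reversible with respect to $\Pi$: it is a mixture of the kernels $P_1(\df y'|x,y)\delta_x(\df x')$ and $P_2(\df x'|x,y)\delta_y(\df y')$, each of which is reversible with respect to $\Pi$ since $P_1$ and $P_2$ are Metropolis--Hastings kernels reversible with respect to $\Pi_{Y|X}(\cdot|x)$ and $\Pi_{X|Y}(\cdot|y)$, respectively. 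Hence Lemma~\ref{lem:roberts} gives $\rho(\PRMM) = \|\PRMM\|_{\Pi}$.

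I would then argue by contrapositive. By Lemmas~\ref{lem:marginal} and~\ref{lem:maximal}, $\rhoD = \bar{\gamma}^2$, so it suffices to show that $\bar{\gamma} = 1$ forces $\|\PRMM\|_{\Pi} = 1$. Fix $g \in L_0^2(\Pi_X)$ and $h \in L_0^2(\Pi_Y)$ with $\|g\|_{\Pi_X} = \|h\|_{\Pi_Y} = 1$, and let $f_g, f_h \in L_0^2(\Pi)$ be given by $f_g(x,y) = g(x)$ and $f_h(x,y) = h(y)$, so that $\|f_g\|_{\Pi} = \|f_h\|_{\Pi} = 1$. A direct computation gives
\[
\PRMM f_h(x,y) = r\, h(y) + (1-r) \int_{\Y} h(y')\, P_1(\df y'|x,y) \,.
\]
Pairing against $f_g$, writing $\Pi(\df x, \df y) = \Pi_X(\df x)\,\Pi_{Y|X}(\df y|x)$, and integrating out $y$ first in the second term, stationarity of $\Pi_{Y|X}(\cdot|x)$ under $P_1(\cdot|x,\cdot)$ yields $\int_{\Y}\int_{\Y} h(y')\, P_1(\df y'|x,y)\, \Pi_{Y|X}(\df y|x) = \int_{\Y} h(y')\, \Pi_{Y|X}(\df y'|x) = (Q_1 h)(x)$, whence
\[
\langle \PRMM f_h, f_g \rangle_{\Pi} = r\, \gamma(g,h) + (1-r)\,\langle Q_1 h, g \rangle_{\Pi_X} = \gamma(g,h) \,,
\]
using $\langle Q_1 h, g \rangle_{\Pi_X} = \gamma(g,h)$.

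Finally, Cauchy--Schwarz gives $\gamma(g,h) = \langle \PRMM f_h, f_g \rangle_{\Pi} \leq \|\PRMM\|_{\Pi}\,\|f_h\|_{\Pi}\,\|f_g\|_{\Pi} = \|\PRMM\|_{\Pi}$, and taking the supremum over all admissible $g$ and $h$ yields $\|\PRMM\|_{\Pi} \geq \bar{\gamma}$. If $\bar{\gamma} = 1$, then $\rho(\PRMM) = \|\PRMM\|_{\Pi} = 1$, which is the contrapositive of the claim. The one point requiring care — and the only genuinely new ingredient compared with Proposition~\ref{pro:rhod<rhorm} — is the order of integration in the second term: one must integrate out $y$ before $y'$ so that it is the stationarity (rather than the full Gibbs-refresh) property of $P_1$ that gets invoked; the rest is a routine transcription.
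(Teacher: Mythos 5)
Your proof is correct and follows essentially the same route as the paper's: both argue by contrapositive via $\rhoD = \bar{\gamma}^2$, pair $\PRMM f_h$ against $f_g$, and use precisely the fact that $P_1(\cdot|x,\cdot)$ leaves $\Pi_{Y|X}(\cdot|x)$ invariant (by integrating out $y$ before $y'$) to conclude $\langle \PRMM f_h, f_g\rangle_{\Pi} = \gamma(g,h)$ and hence $\|\PRMM\|_{\Pi} \geq \bar{\gamma}$. Your explicit remark on reversibility of $\PRMM$ (needed to invoke Lemma~\ref{lem:roberts}) is a point the paper leaves implicit, but otherwise the arguments coincide.
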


\begin{proof}
  The proof is essentially the same as that of
  Proposition~\ref{pro:rhod<rhorm} with $\PRMM$ playing the role of
  $\PRM$.  The only difference is that one needs to make use of the
  fact that $P_1(\df y'|x,y)$ leaves $\Pi_{Y|X}(\df y|x)$ invariant
  for each $x \in \X$: By Cauchy-Schwarz,
\[
\begin{aligned}
\rho (\PRMM) & \geq \langle \PRMM f_h, f_g \rangle_{\Pi} \\
&= r \int_{\X \times \Y} h(y) g(x) \Pi(\df x, \df y) + (1-r)
\int_{\X \times \Y} \int_{\Y} h(y') \, P_{1}(\df y'|x, y) \,
g(x) \, \Pi_{Y|X}(\df y|x) \Pi_X(\df x) \\
& = r \int_{\X \times \Y} h(y) g(x) \Pi(\df x, \df y) + (1-r)
\int_{\X \times \Y} h(y') \, \Pi_{Y|X}(\df y'|x) \,
g(x) \, \Pi_X(\df x) \\
&= \gamma(g, h) .
\end{aligned}
\]
\end{proof}

We will have need of the following condition on the proposal
density $q_1$ at several points.  Notice the analogy to
condition~\ref{cond:C}. 
\begin{condition}
\label{cond:C1}
 \[
    C_1 = \sup_{(x,y,y') \in \X \times \Y \times \Y}
    \frac{\pi_{Y|X}(y'|x)}{q_1(y'|x,y)} < \infty .
  \]
\end{condition}

\begin{proposition} \label{pro:rhodmm<rhodm}
	Suppose that condition~\ref{cond:C1} holds.
	If $\rho(\PDM) < 1$, then $\rho(\PDMM) < 1$.
\end{proposition}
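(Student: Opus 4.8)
The plan is to run the comparison argument used for Proposition~\ref{pro:rhorm<rhodm}, but now with $\PDMM$ playing the role previously played by $\PRM$. The key observation is that $\PDM$ is exactly $\PDMM$ with the $Y$-proposal $q_1$ taken to be the full conditional density $\pi_{Y|X}$ (so that the first Metropolis--Hastings step degenerates to a Gibbs step), while the $X$-update $P_2$ is left unchanged; thus $\PDM((x,y),(\df x',\df y')) = P_2(\df x'|x,y')\,\Pi_{Y|X}(\df y'|x)$, which differs from $\PDMM$ only in that $P_1(\df y'|x,y)$ is replaced by $\Pi_{Y|X}(\df y'|x)$.

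First I would establish a minorization of the $Y$-update: under condition~\ref{cond:C1}, for every $(x,y)$ and every $B \in \mathcal{F}_Y$,
\[
P_1(B|x,y) \;\ge\; \int_B a_1(y';x,y)\,q_1(y'|x,y)\,\df y' \;\ge\; C_1^{-1}\int_B \pi_{Y|X}(y'|x)\,\df y' \;=\; C_1^{-1}\,\Pi_{Y|X}(B|x),
\]
the middle step being the exact analogue of~\eqref{ine:uniform}: condition~\ref{cond:C1} bounds both $\pi_{Y|X}(y'|x)/q_1(y'|x,y)$ and, after relabeling the dummy variables, $\pi_{Y|X}(y|x)/q_1(y|x,y')$, so that $a_1(y';x,y)q_1(y'|x,y) \ge C_1^{-1}\pi_{Y|X}(y'|x)$. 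Integrating the nonnegative kernel $y'\mapsto P_2(\,\cdot\,|x,y')$ against this bound gives $\PDMM((x,y),\cdot) \ge C_1^{-1}\,\PDM((x,y),\cdot)$ as measures, and composing the resulting inequality with itself yields $\PDMM^2((x,y),\cdot) \ge C_1^{-2}\,\PDM^2((x,y),\cdot)$ for every $(x,y)$.

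Next I would apply Lemma~\ref{lem:marginal} to get $\|\PDM^2\|_{\Pi} \le \rho(\PDM) < 1$, and then Lemma~\ref{lem:comparison}, applied to $\PDMM^2$ and $\PDM^2$ (both of which have invariant distribution $\Pi$) with $\delta = C_1^{-2}$, to conclude $\|\PDMM^2\|_{\Pi} < 1$. Finally, since $\|\PDMM\|_{\Pi}\le 1$ and $\|\PDMM^{2k}\|_{\Pi} \le \|\PDMM^2\|_{\Pi}^{k}$, one has $\|\PDMM^{n}\|_{\Pi} \le \|\PDMM^2\|_{\Pi}^{(n-1)/2}$ for all $n$, whence $\rho(\PDMM) \le \limsup_n \|\PDMM^n\|_{\Pi}^{1/n} \le \|\PDMM^2\|_{\Pi}^{1/2} < 1$; here I use the standard fact that $\rho(P)\le \limsup_n\|P^n\|_{\omega}^{1/n}$ for any Markov transition kernel $P$ with invariant distribution $\omega$, which is immediate from the estimate $\|\mu P^n - \omega\|_\omega \le \|P^n\|_\omega\,\|\df\mu/\df\omega - 1\|_\omega$ valid for $\mu$ with $\df\mu/\df\omega \in L^2(\omega)$.

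The main obstacle is this last step, which is the one genuine point of departure from the proof of Proposition~\ref{pro:rhorm<rhodm}. There $\PRM$ is self-adjoint, so $\rho(\PRM) = \|\PRM\|_{\Pi}$ and one simply takes a fourth root of $\|\PRM^4\|_\Pi$; here $\PDMM = P_2 P_1$ is not reversible and, unlike $\PDM$, admits no reversible marginal chain on $\X$, because the intermediate $Y$-value produced by the Metropolis--Hastings kernel $P_1$ still depends on the previous $Y$-value. Consequently neither Lemma~\ref{lem:roberts} nor the marginal-chain half of Lemma~\ref{lem:marginal} is available, and one is forced to pass from $\|\PDMM^2\|_{\Pi}<1$ to $\rho(\PDMM)<1$ through the power-norm estimate above. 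Everything else is a routine transcription, with the roles of $X$ and $Y$ interchanged, of the condition~\ref{cond:C}/\eqref{ine:uniform} computation already carried out in the paper.
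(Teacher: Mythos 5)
Your proof is correct and follows essentially the same route as the paper's: the minorization $P_1(\df y'|x,y) \ge C_1^{-1}\Pi_{Y|X}(\df y'|x)$ from condition~\ref{cond:C1}, the comparison of $\PDMM^2$ with $\PDM^2$ via Lemmas~\ref{lem:marginal} and~\ref{lem:comparison}, and the passage from $\|\PDMM^2\|_{\Pi}<1$ to $\rho(\PDMM)<1$ through the power-norm bound $\|\PDMM^n\|_{\Pi}\le\|\PDMM^2\|_{\Pi}^{(n-1)/2}$ are all exactly the steps the paper uses. Your remark correctly identifies the non-reversibility of $\PDMM$ as the reason this last step cannot be shortcut via Lemma~\ref{lem:roberts}.
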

\begin{proof}
	Observe that by condition~\ref{cond:C1} we have
	\[
	q_1(y'|x,y) a_1(y';x,y) = \pi_{Y|X}(y'|x) \min \left\{
	\frac{q_1(y'|x,y)}{\pi_{Y|X}(y'|x)},
	\frac{q_1(y|x,y')}{\pi_{Y|X}(y|x)} \right\} \ge \frac{1}{C_1} \pi_{Y|X}(y'|x),
	\]
	so that
	\[
	P_1(dy' | x, y) \ge \frac{1}{C_1} \Pi_{Y|X}(dy'|x),
	\]
	and hence,
	\[
	\PDMM((x,y), (\df x', \df y')) \ge \frac{1}{C_1} \PDM((x,y), (\df x', \df y')).
	\]
	It follows that, for $(x,y) \in \X \times \Y$ and $A \in \mathcal{F}_X \times \mathcal{F}_Y$,
	\[
	\PDMM^{2}((x,y), A) \ge \frac{1}{C^{2}_1} \PDM^{2}((x,y), A).
	\]
	By Lemma~\ref{lem:marginal}, $\|\PDM^2\|_{\Pi} \leq \rho(\PDM) <
	1$.
	Then Lemma~\ref{lem:comparison} implies that $\|\PDMM^2\|_{\Pi} < 1$.
	For $\mu \in L_*^2(\Pi)$
	and positive integer~$n$,
	\[
	\|\mu \PDMM^n - \Pi\|_{\Pi} = \|(\mu-\Pi) \PDMM^n\|_{\Pi} = \sup_{f \in L_0^2(\Pi), \|f\|_{\Pi}
		= 1} \left\langle \frac{\df \mu}{\df \Pi} - 1, \PDMM^n f \right\rangle_{\Pi}.
	\]
	Using Cauchy-Schwarz and treating even and odd~$n$ separately, we see that
	\[
		\sup_{f \in L_0^2(\Pi), \|f\|_{\Pi}
			= 1} \left\langle \frac{\df \mu}{\df \Pi} - 1, \PDMM^n f \right\rangle_{\Pi} 
		\leq \|\mu-\Pi\|_{\Pi} \|\PDMM^2\|_{\Pi}^{\lfloor n/2 \rfloor} \leq  \|\mu-\Pi\|_{\Pi} \|\PDMM^2\|_{\Pi}^{(n-1)/2},
	\]
	where $\lfloor n/2 \rfloor$ is the largest integer that does not exceed $n/2$.
	Therefore, $\rho(\PDMM) \leq \|\PDMM^2\|_{\Pi}^{1/2} < 1$.
\end{proof}

\begin{proposition}
\label{pro:rhormm<rhorm}
Suppose that condition~\ref{cond:C1} holds. If $\rhoRM < 1$, then
$\rho(\PRMM) < 1$.
\end{proposition}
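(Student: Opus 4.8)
The plan is to argue exactly as in the proof that, under condition~\ref{cond:C}, $L^2$ geometric ergodicity of the RG sampler is inherited by the RC sampler: I will establish a one-step minorization of $\PRMM$ by a multiple of $\PRM$ and then invoke Lemmas~\ref{lem:roberts} and~\ref{lem:comparison}. Here $\PRM$ is understood as the $\widetilde{\mbox{RC}}$ sampler in which the first component is updated by a Gibbs step, i.e.\ the special case of $\PRMM$ with $q_1(\cdot|x,y) = \pi_{Y|X}(\cdot|x)$, so that $P_1$ becomes $\Pi_{Y|X}$ while the second-component update $P_2$ is unchanged.

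First I would recycle the computation from the proof of Proposition~\ref{pro:rhodmm<rhodm}: condition~\ref{cond:C1} gives $q_1(y'|x,y)\,a_1(y';x,y) \ge C_1^{-1}\,\pi_{Y|X}(y'|x)$ and hence $P_1(\df y'|x,y) \ge C_1^{-1}\,\Pi_{Y|X}(\df y'|x)$ for every $x \in \X$. Next I would note that $C_1 \ge 1$, which follows by integrating the pointwise inequality $\pi_{Y|X}(y'|x) \le C_1\,q_1(y'|x,y)$ over $y'$. Since $\PRMM$ and $\PRM$ share the same first-component update $r\,P_2(\df x'|x,y)\,\delta_y(\df y')$, these two observations combine to give
\[
\PRMM((x,y),A) \ge \frac{1}{C_1}\,\PRM((x,y),A), \qquad (x,y) \in \X \times \Y,\ A \in \mathcal{F}_X \times \mathcal{F}_Y;
\]
here the $(1-r)$-weighted second-component term is dominated below using the minorization of $P_1$, and $C_1 \ge 1$ is precisely what permits pulling the factor $C_1^{-1}$ out past the first-component term, whose weight $r$ is the same in both kernels.

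Finally, $\PRMM$ is reversible with respect to $\Pi$ for the same reason that $\PR$ and $\PRM$ are: it is a random-scan mixture of a first-component update reversible with respect to $\Pi$ (because $P_2$ is reversible with respect to $\Pi_{X|Y}$) and a second-component update reversible with respect to $\Pi$ (because $P_1$ is reversible with respect to $\Pi_{Y|X}$). Hence $\rho(\PRMM) = \|\PRMM\|_{\Pi}$ by Lemma~\ref{lem:roberts}, while $\|\PRM\|_{\Pi} = \rhoRM < 1$ by hypothesis, so Lemma~\ref{lem:comparison} applied with $\delta = C_1^{-1}$ yields $\|\PRMM\|_{\Pi} < 1$, i.e.\ $\rho(\PRMM) < 1$. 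I do not expect a real obstacle: the argument is a single minorization followed by the comparison lemma, and the only point needing a moment's care is the elementary bound $C_1 \ge 1$, without which one would only obtain a minorization by a sub-stochastic modification of $\PRM$ rather than by $\PRM$ itself.
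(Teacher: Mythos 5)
Your proof is correct and follows essentially the same route as the paper's: the minorization $P_1(\df y'|x,y) \ge C_1^{-1}\Pi_{Y|X}(\df y'|x)$ from condition~\ref{cond:C1}, the resulting bound $\PRMM \ge C_1^{-1}\PRM$, and then Lemmas~\ref{lem:roberts} and~\ref{lem:comparison} via reversibility. Your explicit observation that $C_1 \ge 1$ (needed to dominate the $r$-weighted first-component term) is a detail the paper leaves implicit, and it is a worthwhile addition.
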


\begin{proof}
  Since both $\PRM$ and $\PRMM$ are reversible,
  $\rhoRM=\|\PRM\|_{\Pi}$ and $\rho(\PRMM) = \|\PRMM \|_{\Pi}$. 
  Recall that under condition~\ref{cond:C1},
  \[
  P_1(dy' | x, y) \ge \frac{1}{C_1} \Pi_{Y|X}(dy'|x),
  \]
and hence,
\begin{align*}
\PRMM((x,y), (\df x', \df y')) & \ge r P_2(\df x'|x,y) \delta_{y}(\df y') + \frac{1-r}{C_1} \Pi_{Y|X}(dy'|x) \delta_x(\df x')  \\
 & \ge \frac{1}{C_1}
 \PRM ((x,y), (\df x', \df y')).
\end{align*}
Note that, without loss of generality, we have assumed that $\PRM$ and $\PRMM$ have the same selection probability.
The desired result now follows from Lemma~\ref{lem:comparison}.
\end{proof}

One can combine the results above with those in the previous
subsection to obtain other relations.  For example, combining
Proposition~\ref{pro:rhormm<rhorm} with
Proposition~\ref{pro:rhorm<rhodm} gives the following result.
\begin{corollary}
	Suppose that condition~\ref{cond:C1} holds.
	If $\rho(\PDM) < 1$, then $\rho(\PRMM) < 1$.
\end{corollary}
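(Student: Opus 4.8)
The plan is simply to chain the two immediately preceding propositions. First I would invoke Proposition~\ref{pro:rhorm<rhodm}: since $\rho(\PDM) < 1$ by hypothesis, that result gives $\rhoRM < 1$ at once, that is, the RC sampler is $L^2$ geometrically ergodic. Note that this first step uses nothing about the proposal densities; in particular it does not require condition~\ref{cond:C1}, resting only on the maximal-correlation characterization $\rhoD = \bar{\gamma}^2$ from Lemmas~\ref{lem:marginal} and~\ref{lem:maximal} together with the Cauchy--Schwarz lower bound on $\|\PRM\|_{\Pi}$ used in the proof of Proposition~\ref{pro:rhorm<rhodm}.

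Next I would apply Proposition~\ref{pro:rhormm<rhorm}. Under condition~\ref{cond:C1}, that proposition states that $\rhoRM < 1$ implies $\rho(\PRMM) < 1$. Composing the two implications yields $\rho(\PDM) < 1 \Rightarrow \rhoRM < 1 \Rightarrow \rho(\PRMM) < 1$, which is the claim. As in the proof of Proposition~\ref{pro:rhormm<rhorm}, Proposition~\ref{pro:selection} allows us to treat the selection probabilities as arbitrary, so there is no loss of generality in letting the $\PRM$ and $\PRMM$ chains share the same $r$ when the comparison $\PRMM \geq C_1^{-1}\PRM$ is invoked.

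I do not anticipate any real obstacle here: both ingredients are already in hand and the argument is a two-step syllogism. The only point worth stating explicitly is that condition~\ref{cond:C1}, a uniform bound on $q_1$ (the proposal for the first coordinate), is exactly the hypothesis Proposition~\ref{pro:rhormm<rhorm} requires, while Proposition~\ref{pro:rhorm<rhodm} needs no condition on the proposals at all; hence condition~\ref{cond:C1} by itself is enough to conclude the corollary.
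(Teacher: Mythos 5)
Your proof is correct and is exactly the paper's argument: the corollary is obtained by chaining Proposition~\ref{pro:rhorm<rhodm} ($\rho(\PDM)<1 \Rightarrow \rho(\PRM)<1$, no condition on proposals needed) with Proposition~\ref{pro:rhormm<rhorm} ($\rho(\PRM)<1 \Rightarrow \rho(\PRMM)<1$ under condition~\ref{cond:C1}). One small misattribution in your aside: the proof of Proposition~\ref{pro:rhorm<rhodm} rests on the comparison $\PRM^4 \geq r^2(1-r)^2\,\PDM^2$ together with Lemmas~\ref{lem:marginal} and~\ref{lem:comparison}, not on the maximal-correlation/Cauchy--Schwarz argument (that is the proof of Proposition~\ref{pro:rhod<rhorm}); your conclusion that this first step needs no condition on the proposal densities remains correct.
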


To make progress on developing further qualitative convergence
relationships, we will need to include condition~\ref{cond:C} so that
we can appeal to the results of the previous section.  Notice that the
proposal density $q_2$ from Algorithm~\ref{alg:tDC} corresponds to the
proposal density $q$ from Algorithm~\ref{alg:DC} so that
condition~\ref{cond:C} can be interpreted as a condition on $q_2$.

\begin{proposition} \label{pro:G-and-MM}
	Suppose that conditions~\ref{cond:C} and~\ref{cond:C1} hold. Then
\begin{enumerate}
\item if $\rho(\PD) < 1$, then $\rho(\PDMM) < 1$ and $\rho(\PRMM) <
  1$; 
\item if $\rho(\PR) < 1$, then $\rho(\PRMM)<1$ and $\rho(\PDMM) < 1$;
  and
\item if $\rho(\PRMM) < 1$, then $\rho(\PDMM) < 1$.
\end{enumerate}
\end{proposition}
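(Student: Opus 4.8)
The plan is to prove this purely by chaining together the qualitative implications already established in this section, together with Theorem~\ref{thm:main}. The crucial reduction is that, by Theorem~\ref{thm:main}, $\rho(\PD) < 1$ if and only if $\rho(\PR) < 1$, and, by Proposition~\ref{pro:rhod<rhormm}, $\rho(\PRMM) < 1$ implies $\rho(\PD) < 1$. Hence all three items follow once we prove the single core claim: under conditions~\ref{cond:C} and~\ref{cond:C1}, $\rho(\PD) < 1$ implies both $\rho(\PDMM) < 1$ and $\rho(\PRMM) < 1$.

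To prove the core claim, I would first apply the first item of the proposition proved earlier in Section~\ref{sec:tcs} under condition~\ref{cond:C} (recalling, as noted just above the statement, that $q_2$ plays the role of $q$), which gives $\rho(\PDM) < 1$ from $\rho(\PD) < 1$. From $\rho(\PDM) < 1$ I then branch in two directions: Proposition~\ref{pro:rhodmm<rhodm}, which uses condition~\ref{cond:C1}, yields $\rho(\PDMM) < 1$; and composing Proposition~\ref{pro:rhorm<rhodm} with Proposition~\ref{pro:rhormm<rhorm} (the latter again using condition~\ref{cond:C1}) gives the chain $\rho(\PDM) < 1 \Rightarrow \rho(\PRM) < 1 \Rightarrow \rho(\PRMM) < 1$. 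This is precisely item~1. Item~2 is then immediate from Theorem~\ref{thm:main}: $\rho(\PR) < 1$ forces $\rho(\PD) < 1$, so item~1 applies. Item~3 is likewise immediate from Proposition~\ref{pro:rhod<rhormm}: $\rho(\PRMM) < 1$ forces $\rho(\PD) < 1$, so item~1 applies and yields $\rho(\PDMM) < 1$.

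I do not expect a genuine obstacle here; the argument is bookkeeping over the arrows of Figure~\ref{fig:ergodicity} together with the $\widetilde{\mbox{DC}}$ and $\widetilde{\mbox{RC}}$ extensions. The only points requiring a little care are (a) checking that every invoked result has its hypotheses satisfied --- condition~\ref{cond:C} for the DG-to-DC step and condition~\ref{cond:C1} for the DC-to-$\widetilde{\mbox{DC}}$ and RC-to-$\widetilde{\mbox{RC}}$ steps, both of which are assumed in the statement --- and (b) organizing the write-up so that items~2 and~3 are presented as short corollaries of item~1 rather than re-derived from scratch.
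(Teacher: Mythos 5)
Your proof is correct and follows essentially the same route as the paper: item~1 is obtained by chaining the condition~\ref{cond:C} arrows from Section~\ref{sec:tcs} with Propositions~\ref{pro:rhodmm<rhodm} and~\ref{pro:rhormm<rhorm}, and the paper likewise dispatches items~2 and~3 as "similar" (your explicit reductions via Theorem~\ref{thm:main} and Proposition~\ref{pro:rhod<rhormm} are exactly the intended bookkeeping). No gaps.
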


\begin{proof}
  We consider only the first item as the others are similar.  From
  Figure~\ref{fig:ergodicity-2c} we have that, under
  condition~\ref{cond:C}, if $\rho(\PD) < 1$, then $\rho(\PDM) < 1$
  and $\rho(\PRM) < 1$.  Combining this with
  Propositions~\ref{pro:rhodmm<rhodm} and~\ref{pro:rhormm<rhorm}
  yields the claim.
\end{proof}

\begin{remark}
  The relations in Proposition~\ref{pro:G-and-MM} do not necessarily
  hold without conditions such as~\ref{cond:C} and~\ref{cond:C1}.  For
  instance, in the previous subsection we have shown that $\rho(\PRM) < 1$ does not imply $\rho(\PDM) < 1$ in general.  Since
  $\PRM$ and $\PDM$ are respectively special cases of $\PRMM$
  and $\PDMM$, 
  $\rho(\PRMM) < 1$ does not imply
  $\rho(\PDMM)<1$ in general.
\end{remark}

\begin{figure}
	\centering
	\includegraphics[width=0.35\textwidth]{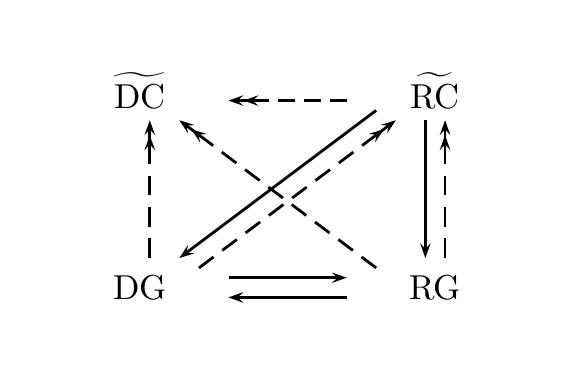}
	\caption{Known qualitative convergence relationships among the
          Gibbs samplers and their CMH variants with two
          Metropolis-Hastings updates.  } \label{fig:ergodicity-3}
\end{figure}

We depict the known qualitative convergence relations among DG,
RG, $\widetilde{\mbox{DC}}$, and $\widetilde{\mbox{RC}}$ in Figure~\ref{fig:ergodicity-3}.  A
dashed arrow with double arrowheads from one to another means that
$L^2$ geometric ergodicity of the former implies that of the latter
under conditions~\ref{cond:C} and~\ref{cond:C1}, but not in general.
Figure~\ref{fig:ergodicity-3} illustrates that the complexity of
$\widetilde{\mbox{DC}}$ (in particular, the lack of a reversible marginal Markov
chain) means that it is an open question as to whether the $L^2$
geometric ergodicity of $\widetilde{\mbox{DC}}$ implies the $L^2$ geometric ergodicity
of any of the rest.

Finally note that combining Figures~\ref{fig:ergodicity-2c}
and~\ref{fig:ergodicity-3} does not characterize all known qualitative
convergence relationships.  For example, combining the results in the
figures would suggest that under conditions~\ref{cond:C}
and~\ref{cond:C1} if $\rho(\PDM)<1$, then $\rho(\PDMM)<1$, but
Proposition~\ref{pro:rhodmm<rhodm} shows that only
condition~\ref{cond:C1} is required.

\section{Final Remarks}
\label{sec:final}

We have focused on convergence relationships between
deterministic-scan and random-scan MCMC algorithms when there are two
component-wise updates.  At the heart of these relationships is the
explicit quantitative relationship developed between Gibbs samplers in
Theorem~\ref{thm:main}.  This result is intuitively appealing since a
random-scan Gibbs sampler may update the same component consecutively
and thus one might expect convergence to be slower than the
deterministic-scan version.  When there are more than two components
it is intuitively less obvious that the random-scan Gibbs sampler will
converge substantially more slowly than the deterministic-scan version
since it is much less likely to update the same component
consecutively.  Indeed, \cite{roberts2016suprising} provide some
examples where the relationship between the convergence rates becomes
more complicated when the number of components is large. However, as
we explained in Remark~\ref{rem:more} there are technical hurdles to
investigating this rigorously.  As we saw in Section~\ref{sec:cmh}, the
situation is even more complicated, even in the two-component setting,
when considering deterministic-scan and random-scan versions of CMH
Markov chains.  There is ample room for future work along these lines.

\bigskip
{\noindent\bf Acknowledgment.}
We thank the Editor, the Associate Editor, and two anonymous reviewers for their
constructive feedback. 

\appendix

\section{Proof of Lemma~\ref{lem:marginal}} \label{app:marginal}

We will prove
\[
\|\PD^n\|_{\Pi}^{1/(n-1/2)} = \rho(\PD) = \|\PX\|_{\Pi_X} = \|\PY\|_{\Pi_Y} \,.
\]
The proof for the other equation is similar. 
\begin{enumerate}
	\item[(i)] $\|\PX\|_{\Pi_X} = \|\PY\|_{\Pi_Y}$.  This is given in
	\pcite{liu:etal:1994} Theorem 3.2.
	
	\item[(ii)] $\|\PD^n\|_{\Pi}^{1/(n-1/2)} = \|\PX\|_{\Pi_X}$.  Firstly,
	since $\PX$ is self-adjoint, for each positive integer~$n$,
	$\|\PX^n\|_{\Pi_X} = \|\PX\|_{\Pi_X}^n$  and similarly for $\PY$.

	We begin by showing that
	$\|\PD^n\|_{\Pi}^{1/(n-1/2)} \leq \|\PX\|_{\Pi_X}$.  Let
	$f \in L_0^2(\Pi)$ be such that $\|f\|_{\Pi} = 1$, and let
	\[
	\begin{aligned}
	&h_f(y) = \int_{\X} f(x,y) \Pi_{X|Y}(\df x|y)\,, \; y \in \Y \,, \\
	&g_f(x) = \int_{\Y} h_f(y) \Pi_{Y|X}(\df y|x)\,, \; x \in \X \,.
	\end{aligned}
	\]
	Then $h_f \in L_0^2(\Pi_Y)$, and $g_f \in L_0^2(\Pi_X)$.  
	Note that
	\[
	\begin{aligned}
		\langle g_f, g_f \rangle_{\Pi_X} &= \int_{\X} \int_{\Y} h_f(y) \Pi_{Y|X}(\df y|x) \int_{\Y} h_f(y') \Pi_{Y|X}(\df y'|x) \, \Pi_X(\df x) \\
		&= \int_{\Y \times \X \times \Y} h_f(y) \, h_f(y')  \Pi_{Y|X}(\df y'|x) \Pi_{X|Y}(\df x|y) \, \Pi_Y(\df y) \\
		&= \langle h_f,
		\PY  h_f \rangle_{\Pi_Y} .
	\end{aligned}
	\]
	Moreover, by the Cauchy-Schwarz inequality, $\|h_f\|_{\Pi_Y}\leq 1$.
	It follows that
	\[
	\|g_f\|_{\Pi_X}^2 = \langle h_f,
	\PY  h_f \rangle_{\Pi_Y} \leq \|\PY\|_{\Pi_Y} = \|\PX\|_{\Pi_X} \,.
	\]
	It is easy to
	verify that, for each positive integer~$n$ and
	$(x,y) \in \X \times \Y$, $\PD^n f(x,y) = \PX^{n-1} g_f(x)$.
	Therefore,
	\[
	\|\PD^n f\|_{\Pi} = \| \PX^{n-1} g_f \|_{\Pi_X} \leq
	\|\PX\|_{\Pi_X}^{n-1} \|g_f\|_{\Pi_X} \leq \|\PX\|_{\Pi_X}^{n-1/2}
	\,.
	\]
	Taking the supremum with respect to~$f$ yields the desired inequality.
	
	
	We now show that $\|\PY\|_{\Pi_Y} \leq \|\PD^n\|_{\Pi}^{1/(n-1/2)}$
	and it will follow immediately that
	$\|\PX\|_{\Pi_X} \leq \|\PD^n\|_{\Pi}^{1/(n-1/2)}$.  Let
	$h \in L_0^2(\Pi_Y)$ be such that $\|h\|_{\Pi_Y} = 1$.  Let
	$f_h \in L_0^2(\Pi)$ be such that $f_h(x,y) = h(y)$ for
	$(x,y) \in \X \times \Y$.  Then $\|f_h\|_{\Pi} = 1$.  Lastly, let
	$Q_1h \in L_0^2(\Pi_X)$ be such that
	$(Q_1h)(x) = \int_{\Y} h(y) \Pi_{Y|X}(\df y|x)$. 
	Note that, for $(x,y) \in \X \times \Y$, $(Q_1 h)(x) = (\PD f_h)(x,y)$.
	A careful calculation
	shows that
	\[
	\begin{aligned}
		\langle h, \PY^{2n-1} h \rangle_{\Pi_Y} &= \int_{\Y} h(y) \int_{\X}  (\PX^{2n-2} Q_1 h )(x) \, \Pi_{X|Y}(\df x|y) \, \Pi_Y(\df y) \\
		&= \int_{\Y \times \X} h(y) \,  (\PX^{2n-2} Q_1 h )(x) \, \Pi_{Y|X}(\df y|x) \, \Pi_X(\df x) \\
		&= \langle \PX^{n-1} Q_1 h,
		\PX^{n-1} Q_1h \rangle_{\Pi_X} \\
		&= \langle \PD^n f_h, \PD^n f_h
		\rangle_{\Pi} \\
		& \leq \|\PD^n\|_{\Pi}^2 \,.
	\end{aligned}
	\] 
	Since $\PY^{2n-1}$ is non-negative definite,
	\[
	\|\PY\|_{\Pi_Y}^{2n-1} = \|\PY^{2n-1}\|_{\Pi_Y} = \sup \{ \langle \PY^{2n-1} h', h'
	\rangle_{\Pi_Y}: h' \in L_0^2(\Pi_Y), \|h'\|_{\Pi_Y} = 1\} \,.
	\]
	This shows that $\|\PY\|_{\Pi_Y}^{2n-1} \leq \|\PD^n\|_{\Pi}^2$.
	
	\item[(iii)] $\rho(\PD) = \|\PX\|_{\Pi_X}$.  By
	Lemma~\ref{lem:roberts}, $\|\PX\|_{\Pi_X} = \rho(\PX)$, the $L^2$
	convergence rate of the $X$-marginal DG chain.
	
	We now show that $\rho(\PX) \leq \rhoD$.  Let $g \in L_0^2(\Pi_X)$
	be such that $\|g\|_{\Pi_X} = 1$.  Let $f_g \in L_0^2(\Pi)$ be such
	that $f_g(x,y) = g(x)$.  Then $\|f_g\|_{\Pi} = 1$.  For any
	$\mu \in L_*^2(\Pi_X)$ and positive integer~$n$,
	\[
	|\mu \PX^n g - \Pi_X g | = |\tilde{\mu} \PD^n f_g - \Pi f_g |
	\leq \|\tilde{\mu} \PD^n - \Pi \|_{\Pi} \,,
	\]
	where $\tilde{\mu}$ is any measure in $L_*^2(\Pi)$ such that
	$\int_{\Y} \tilde{\mu}(\cdot, \df y) = \mu(\cdot)$.  Taking the
	supremum with respect to~$g$ shows that
	\[
	\|\mu \PX^n - \Pi_X\|_{\Pi_X} \leq \|\tilde{\mu} \PD^n - \Pi
	\|_{\Pi} \,.
	\]
	This implies that $\rho(\PX) \leq \rhoD$.
	
	Finally, we show that $\rhoD \leq \rho(\PX)$.  Let
	$\tilde{\mu} \in L_*^2(\Pi)$, and define $f \in L_0^2(\Pi)$ and
	$g_f \in L_0^2(\Pi_X)$ as in (ii).  Then, for a positive
	integer~$n$,
	\[
	|\tilde{\mu} \PD^n f - \Pi f | = |\mu \PX^{n-1} g_f - \Pi_X g_f|
	\leq \|\mu \PX^{n-1} - \Pi_X\|_{\Pi_X} \,,
	\]
	where $\mu(\cdot) = \int_{\Y} \tilde{\mu}(\cdot, \df y)$.  Taking
	the supremum with respect to~$f$ shows that
	\[
	\|\tilde{\mu} \PD^n - \Pi \|_{\Pi} \leq \|\mu \PX^{n-1} -
	\Pi_X\|_{\Pi_X} \,,
	\]
	which implies that $\rhoD \leq \rho(\PX)$.
\end{enumerate}

\section{Proof of Proposition~\ref{pro:l1l2}} 
\label{app:l1l2}

We will prove the result for $\PDM$ and $\PRM$. The proofs for $\PD$ and
$\PR$ are similar.
We will make use of results in \cite{roberts2001geometric}.
These results require $\mathcal{F}_X$ and $\mathcal{F}_X \times \mathcal{F}_Y$ to be countably generated.
We have assumed that $\mathcal{F}_X$ and $\mathcal{F}_Y$ are countably generated.
This implies that $\mathcal{F}_X \times \mathcal{F}_Y$ is also countably generated.
Indeed, if $\mathcal{F}_X$ is generated by $\{A_i\}_{i=1}^{\infty}$ and $\mathcal{F}_Y$ is generated by $\{B_j\}_{j=1}^{\infty}$, then $\mathcal{F}_X \times \mathcal{F}_Y$ can be generated by sets of the forms $A_i \times \Y$ and $\X \times B_j$.

Consider $\PRM$. Then the claim follows immediately due to its
reversibility with respect to $\Pi$ \cite[][Theorem
2]{roberts2001geometric}.

Now suppose $\PDM$ is $L^2$ geometrically ergodic.  Then it is
$\Pi$-a.e. geometrically ergodic \cite[][Theorem
1]{roberts2001geometric}. Conversely, suppose that the $\PDM$ is
$\Pi$-a.e. geometrically ergodic.  This implies $\PXM$ is
$\Pi_X$-a.e. geometrically ergodic.  It is also straightforward to
check that $\PXM$ is $\varphi^*$-irreducible, with
$\varphi^*(\cdot) = \int_{\Y} \varphi(\cdot, \df y)$.  Since $\PXM$
is reversible with respect to $\Pi_X$, it is also $L^2$
geometrically ergodic.  By Corollary~\ref{cor:marginal}, $\PDM$ must be $L^2$ geometrically ergodic
as well.

\section{Proof of Lemma~\ref{lem:H-closed}} 
\label{app:H-closed}

It suffices to show that~$H$ is closed \citep[see, e.g.,][\S
6]{helmberg2014introduction}.  Consider a sequence of functions
in~$H$, $\{g_n \oplus h_n\}_{n=1}^{\infty}$, such that
\[
\lim\limits_{n \to \infty} (g_n \oplus h_n) = f \in L_0^2(\Pi) \,.
\]
The sequence $\{g_n \oplus h_n\}$ is Cauchy, that is,
\[
\lim\limits_{n \to \infty} \sup_{m \geq n} \|g_n \oplus h_n - (g_m
\oplus h_m) \|_{\Pi} = 0 \,.
\]
By~\eqref{ine:H-norm},
\[
\|g_n \oplus h_n - (g_m \oplus h_m) \|_{\Pi}^2 \geq
(1-\bar{\gamma})(\|g_n - g_m\|_{\Pi_X}^2 + \|h_n - h_m\|_{\Pi_Y}^2)
\,.
\]
Since $\bar{\gamma} < 1$, $\{g_n\}$ and $\{h_n\}$ are Cauchy as well.
By the completeness of $L_0^2(\Pi_X)$ and $L_0^2(\Pi_Y)$, there exist
$g \in L_0^2(\Pi_X)$ and $h \in L_0^2(\Pi_Y)$ such that
\[
\lim\limits_{n \to \infty} g_n = g \,, \quad \lim\limits_{n \to
	\infty} h_n = h \,.
\]
Again by~\eqref{ine:H-norm},
\[
\|g_n \oplus h_n - (g \oplus h )\|_{\Pi}^2 \leq
(1+\bar{\gamma})(\|g_n-g\|_{\Pi_X}^2 + \|h_n-h\|_{\Pi_Y}^2) \,.
\]
This implies that
\[
\lim\limits_{n \to \infty} (g_n \oplus h_n) = g \oplus h \,.
\]
Hence, $f = g \oplus h \in H$, meaning that~$H$ is closed.
%
%

\section{A Lemma concerning Theorem~\ref{thm:main}} 
\label{app:main}

The following lemma is the result of several elementary facts in
functional analysis.  See, e.g., \cite{helmberg2014introduction}, \S
23, 24.
\begin{lemma} \label{lem:approxeigen} Let $H'$ be a real or complex
	Hilbert space equipped with inner product
	$\langle \cdot, \cdot \rangle$ and norm $\|\cdot\|$.  Let~$P$ be a
	bounded non-negative definite operator on $H'$.  Then $\|P\|$ is an
	approximate eigenvalue of~$P$, i.e., there exists a sequence
	$\{f_n\}_{n=1}^{\infty}$ in~$H'$ such that $\|f_n\| = 1$ for
	each~$n$, and $\lim\limits_{n \to \infty} \|Pf_n - \|P\|f_n\| = 0$.
\end{lemma}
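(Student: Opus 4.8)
The plan is to combine the variational description of the norm of a self-adjoint operator with a one-line $L^2$ computation. First I would recall the standard fact (used already elsewhere in the paper, and found in the cited sections of Helmberg) that a bounded self-adjoint operator $P$ on a Hilbert space satisfies $\|P\| = \sup_{\|f\| = 1} |\langle Pf, f\rangle|$. Since $P$ is in addition non-negative definite, $\langle Pf, f\rangle \geq 0$ for every $f$, so the absolute value is superfluous and $\|P\| = \sup_{\|f\|=1} \langle Pf, f\rangle$. Writing $\lambda = \|P\|$, the definition of the supremum produces a sequence $\{f_n\}_{n=1}^{\infty}$ with $\|f_n\| = 1$ for each $n$ and $\langle Pf_n, f_n\rangle \to \lambda$. (If $\lambda = 0$ then $P = 0$ and any fixed unit vector repeated works; so one may assume $\lambda > 0$.)

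Next I would estimate the defect $\|Pf_n - \lambda f_n\|$ directly. Expanding the squared norm and using that $\langle Pf_n, f_n\rangle$ is real and non-negative (since $P$ is self-adjoint and non-negative definite), together with $\|Pf_n\| \leq \|P\|\,\|f_n\| = \lambda$ and $\|f_n\| = 1$, gives
\[
\|Pf_n - \lambda f_n\|^2 = \|Pf_n\|^2 - 2\lambda \langle Pf_n, f_n\rangle + \lambda^2\|f_n\|^2 \leq 2\lambda^2 - 2\lambda \langle Pf_n, f_n\rangle = 2\lambda\bigl(\lambda - \langle Pf_n, f_n\rangle\bigr).
\]
By the choice of $\{f_n\}$ the right-hand side tends to $0$ as $n \to \infty$, so $\|Pf_n - \|P\|f_n\| \to 0$, which is exactly the assertion that $\|P\|$ is an approximate eigenvalue of $P$.

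I do not anticipate a genuine obstacle here: the argument is essentially a bookkeeping exercise once the two background facts — the variational formula for $\|P\|$ for self-adjoint $P$, and the removal of the absolute value under non-negativity — are in hand, and both are cited from Helmberg. The computation is identical in the real and complex cases, since non-negative definiteness forces $\langle Pf_n, f_n\rangle \in [0, \infty)$ in either setting, so no separate treatment is needed.
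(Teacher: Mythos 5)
Your proof is correct and follows essentially the same route as the paper's: take a maximizing sequence for the variational formula $\|P\| = \sup_{\|f\|=1}\langle Pf, f\rangle$ (valid since $P$ is non-negative definite) and expand $\|Pf_n - \|P\|f_n\|^2$ to see it tends to zero. Your bound $\|Pf_n\|^2 \le \|P\|^2$ replaces the paper's intermediate observation that $\|Pf_n\| \to \|P\|$, but this is an immaterial variation of the same computation.
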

\begin{proof}
	Since~$P$ is non-negative definite,
	\[
	\|P\| = \sup_{f \in H', \|f\| = 1} \langle Pf, f \rangle \,.
	\]
	It follows that there exists a sequence $\{f_n\}_n$ in~$H'$ such
	that $\|f_n\| = 1$ for each~$n$, and
	$\lim\limits_{n \to \infty} \langle Pf_n, f_n \rangle = \|P\|$.
	Note that
	\[
	\langle Pf_n, f_n \rangle \leq \|Pf_n\| \leq \|P\| \,.
	\]
	This implies that $\|Pf_n\| \to \|P\|$ as $n \to \infty$.  It
	follows that
	\[
	\lim\limits_{n \to \infty} \|Pf_n - \|P\|f_n\|^2 = \lim\limits_{n
		\to \infty} ( \|Pf_n\|^2 + \|P\|^2 - 2 \|P\| \langle Pf_n, f_n
	\rangle ) = 0 \,.
	\]
\end{proof}

\bibliographystyle{apalike}
\bibliography{qqgj}

\end{document}